\def\th{\theta}
\def\Cal{\mathcal}
\def\N{{\Cal N}}
\def\I{{\Cal I}}
\def\T{{\Cal T}}
\def\f0{f_0}
\def\Fc0{\varphi_0}
\def\rn{\bbr^n}
\def\I_k {I_{-}^{k/2}}
\def\I+k {I_{+}^{k/2}}
\def\bbr{{\Bbb R}}
\def\Lip{{\hbox{\rm Lip}}}
\def\part{\partial}
\def\intl{\int\limits}
\def\b{\beta}
\def\Gam{\Gamma}
\def\Om{\Omega}
\def\a{\alpha}
\def\om{\omega}
\def\Del{\Delta}
\def\del{\delta}
\def\vp{\varphi}
\def\gam{\gamma}
\def\sig{\sigma}
\def\lam{\lambda}
\def\z{\zeta}
\def\e{\varepsilon}
\def\t{\tau}
\font\frak=eufm10
\def\fr#1{\hbox{\frak #1}}
\def\frC{\fr{C}}
\def\part{\partial}
\def\intl{\int\limits}
\def\b{\beta}
\def\Gam{\Gamma}
\def\Om{\Omega}
\def\a{\alpha}
\newtheorem{theorem}{Theorem}[section]
\newtheorem{lemma}[theorem]{Lemma}
\newtheorem{corollary}[theorem]{Corollary}
\theoremstyle{remark}
\newtheorem{remark}[theorem]{Remark}
\newtheorem{example}[theorem]{Example}
\numberwithin{equation}{section}
\newcommand{\be}{\begin{equation}}
\newcommand{\ee}{\end{equation}}
\newcommand{\bea}{\begin{eqnarray}}
\newcommand{\eea}{\end{eqnarray}}
\newcommand{\Bea}{\begin{eqnarray*}}
\newcommand{\Eea}{\end{eqnarray*}}
\def\sideremark#1{\ifvmode\leavevmode\fi\vadjust{\vbox to0pt{\vss
 \hbox to 0pt{\hskip\hsize\hskip1em
\vbox{\hsize2cm\tiny\raggedright\pretolerance10000
 \noindent #1\hfill}\hss}\vbox to8pt{\vfil}\vss}}}%
\begin{document}

\title[The Vertical Slice Transform]
{The Vertical Slice Transform in Spherical Tomography}

\author{B. Rubin }
\address{Department of Mathematics, Louisiana State University, Baton Rouge,
Louisiana 70803, USA}
\email{borisr@lsu.edu}


\subjclass[2010]{Primary 44A12; Secondary 35L05, 45Q05, 92C55}



\keywords{Spherical tomography, inversion formulas, thermoacoustic tomography. }

\begin{abstract}

The  vertical slice transform  takes  a function  on the unit sphere  in $\bbr^{n+1}$, $n\ge 2$,   to  integrals of that function over spherical slices parallel to the last coordinate axis.  In the case $n=2$ these transforms  arise in thermoacoustic tomography.  We obtain  new inversion formulas for the vertical slice transform and  its singular value decomposition.  The results can be applied to the inverse problem for the Euler-Poisson-Darboux equation associated to the corresponding spherical means.
 \end{abstract}

\maketitle

\section{Introduction}
\setcounter{equation}{0}

Let $\T$  be the set of all cross-sections  of the unit sphere $S^n$ in  $\bbr^{n+1}$ by the hyperplanes parallel to the last coordinate axis.
 The {\it vertical slice transform} takes a function $f$ on $S^n$ to a function $Vf$ on  $\T$ by integration over these cross-sections. Specifically,
 \be\label{vt}  (Vf) (\t)=\intl_\t f(x)\, d_\t x, \qquad \t \in \T,\ee
where $d_\t x$ stands for the induced surface measure on $\t$.

Transformations (\ref{vt}) with $n=2$ arise in thermoacoustic tomography; see  \cite{GRS, HQ,  ZS}, where
 inversion of $Vf$ is reduced to the classical Radon transform for lines in the plane \cite{GGG, H}. In Section 2 we extend this approach to all $n\ge 2$, derive  new inversion formulas and investigate the corresponding  singular value decompositions.

An alternative way to study integrals (\ref{vt}) is to treat them  as the spherical means over geodesic spheres centered on the equator of $S^n$. This situation resembles the  Euclidean case  having wide application in photoacoustic and thermoacoustic tomography, when the  spherical means are evaluated over  spheres centered on a boundary of a ball; see, e.g., \cite { AKQ,  AN, AK, AER, FHR, FR,  Ku14, KK08, KK15, Ku, M13,  N, P, XW}, to mention a few. In Section 3 we will show that the operator (\ref{vt}) can  be explicitly inverted using the method of analytic continuation developed in our previous papers \cite{Ru08b, AER}. This  alternative approach might be of independent interest.

\vskip 0.1 truecm

\noindent{\bf Notation.}

 We write $x\in \bbr^{n+1}$ as $(x', x_{n+1})$, where $x'\in \rn$, $n\ge 2$.  In the following,   $S^n$ is the unit sphere in  $\bbr^{n+1}$,  $S^n_+= \{x \in S^n: x_{n+1}\ge 0\}$ is the ``upper" hemisphere,  $B_n=\{x'\in \rn: |x'|\le 1\}$ is the unit ball in $\rn$; $S^{n-1}$ is the  boundary of $B_n$; $\sig_{n-1}\!=\!2\pi^{n/2}/\Gam (n/2)$ is the area of  $S^{n-1}$.
 If $x\in S^n$, then $d\sig(x)$ stands for the Riemannian surface measure on $S^n$. The standard notation $C(\Om)$ and $L^p(\Om)$ is used for the corresponding spaces of continuous and $L^p$ functions on the set $\Om$ under consideration.

 Everywhere in the following, we assume that the function $f$ in (\ref{vt}) is  even  in the last variable, because if $f(x', \cdot)$ is odd, the integral (\ref{vt}) is zero.

\section{Connection with the Hyperplane Radon Transform}

The following  lemma is a standard fact from Calculus.
\begin{lemma} \label{lem2.1} If $f\in L^1 (S^n_+)$, then
\be\label{vt2} \intl_{S^n_+} f(x)\,d\sig(x)=\intl_{B_n} \vp (x')\, dx', \qquad \vp (x')=\frac{f(x', \sqrt{1-|x'|^2})}{\sqrt{1-|x'|^2}}.\ee
\end{lemma}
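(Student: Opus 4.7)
The plan is to parametrize the upper hemisphere $S^n_+$ as a graph over the unit ball $B_n$ by the map $\Phi(x') = (x', \sqrt{1-|x'|^2})$ and compute the induced surface measure explicitly. Since Lemma \ref{lem2.1} is advertised as a standard fact from Calculus, the proof amounts to a short Jacobian calculation, after which the conclusion follows by a change of variables.

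Concretely, I would proceed as follows. First, observe that the map $\Phi : B_n^\circ \to S^n_+$ (where $B_n^\circ$ is the open ball) is a smooth bijection onto the open upper hemisphere, whose complement in $S^n_+$ is the equator, a set of zero surface measure; so it suffices to integrate over the open hemisphere. Next, for the graph $x_{n+1}=g(x')$ with $g(x')=\sqrt{1-|x'|^2}$, use the standard surface area formula
\be\label{planproof1}
d\sigma(\Phi(x'))=\sqrt{1+|\nabla g(x')|^2}\,dx'.
\ee
Compute $\partial_i g(x')=-x_i'/\sqrt{1-|x'|^2}$, so
\be\label{planproof2}
1+|\nabla g(x')|^2=1+\frac{|x'|^2}{1-|x'|^2}=\frac{1}{1-|x'|^2},
\ee
which gives the surface element $d\sigma(\Phi(x'))=(1-|x'|^2)^{-1/2}\,dx'$.

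Substituting into $\int_{S^n_+}f(x)\,d\sigma(x)$ and using the change of variables $x=\Phi(x')$ yields
\be\label{planproof3}
\int_{S^n_+}f(x)\,d\sigma(x)=\int_{B_n}\frac{f(x',\sqrt{1-|x'|^2})}{\sqrt{1-|x'|^2}}\,dx',
\ee
which is exactly \eqref{vt2}. Integrability of $\varphi$ on $B_n$ is equivalent to integrability of $f$ on $S^n_+$ by the same change of variables, justifying the Fubini/Tonelli use.

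There is really no serious obstacle here: the only mild point worth flagging is that $\Phi$ is not a diffeomorphism up to the boundary of $B_n$, since $\nabla g$ blows up on $S^{n-1}$; this is handled either by working on the open ball and observing that the equator has measure zero, or by a standard limiting argument on $\{|x'|\le 1-\varepsilon\}$ and monotone/dominated convergence using $f\in L^1(S^n_+)$.
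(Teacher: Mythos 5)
Your proof is correct and follows essentially the same route as the paper: the graph parametrization $x_{n+1}=\sqrt{1-|x'|^2}$ over $B_n$ and the surface-area factor $\sqrt{1+|\nabla g|^2}=(1-|x'|^2)^{-1/2}$. The paper's proof is just this one-line Jacobian computation; your extra remarks about the equator having measure zero and the blow-up of $\nabla g$ at the boundary are fine but not needed beyond what you already say.
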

\begin{proof}
\bea
\intl_{S^n_+}\! f(x) \, d\sig (x)\!\!&\!=\!\!&\!\intl_{B_n} f(x',  \sqrt{1\!-\!|x'|^2}) \sqrt{1\!+\!
\left (\frac{\partial x_{n+1}}{\partial x_1}\right )^2\!+\!\ldots \!+\!\left (\frac{\partial x_{n+1}}{\partial x_n}\right )^2} dx'\nonumber\\
&\!=\!&\!\!\intl_{B_n} f(x',  \sqrt{1\!-\!|x'|^2}) \,\frac{dx'}{ \sqrt{1\!-\!|x'|^2}}=\intl_{B_n} \vp(x')\, dx'. \nonumber\eea
\end{proof}

Every vertical hyperplane section $\t$ of $S^n$ can be parametrized by the pair $(\theta, t)$, where $\theta \in S^{n-1}$ and $t\in (-1,1)$ is the signed distance from the origin to the hyperplane containing $\t$. We denote
\[\frC_n=\{(\th,t): \th \in S^{n-1},  \,t\in (-1,1)\}.\]
Owing to the evenness of $f(x', \cdot)$,
\be\label{vt1a}
(Vf)(\t)\equiv (Vf)(\theta, t)=2 \intl_{S^n_+ \cap \,\t} f(x)\, d_{\t}\sig (x),\ee
where $d_{\t}\sig (x)$ stands for the surface element on $\t$. Thus, without loss of generality, we can restrict ourselves to the hemispherical transform
\be\label{vt1a}
(V_+f)(\t)\equiv (V_+f)(\theta, t)=\intl_{S^n_+ \cap \,\t} f(x)\, d_{\t}\sig (x)\ee
that might be of independent interest.

Let
\be\label{vt3}
(R\vp) (\theta, t)\!\equiv \!(R_\theta\vp) (t)\!=\!\!\intl_{x' \cdot \theta =t} \!\!\vp (x')\, d_{\theta,t} x', \qquad \theta\in S^{n-1}, \; t\in \bbr,\ee
be the hyperplane Radon transform of the function $\vp$ from (\ref{vt2}), where $d_{\theta,t} x'$ is the volume element of the hyperplane $x' \cdot \theta =t$.
Clearly, $(R\vp) (\theta, t)=0$ whenever $|t|\ge 1$.

\begin{lemma} \label{lem2.2} If $f\in L^1 (S^n_+)$, then for almost all $(\theta, t) \in \frC_n$,
\be\label{vt4}  (V_+f)(\theta, t)=\sqrt {1-t^2}\, (R\vp) (\theta, t).\ee
 If $f\in C(S^n_+)$, then (\ref{vt4}) holds for all $(\theta, t) \in \frC_n$.
\end{lemma}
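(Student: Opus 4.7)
The plan is to parametrize the hemispherical slice $S^n_+\cap\tau$ by its orthogonal projection onto $\theta^\perp$, compute the induced surface element explicitly, and then recognize the resulting integral as $\sqrt{1-t^2}$ times the Radon integral of $\vp$ over the hyperplane $x'\cdot\theta=t$.

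First I would fix $(\theta,t)\in\frC_n$ and write every $x'$ with $x'\cdot\theta=t$ in the form $x'=t\theta+u$ with $u\in\theta^\perp$ and $|u|^2\le 1-t^2$. The hemispherical slice $S^n_+\cap\tau$ is then parametrized by $u\mapsto (t\theta+u,\,\sqrt{1-t^2-|u|^2})$, a bijection (up to a null boundary set), since the last coordinate of any point of the upper slice is forced to equal $\sqrt{1-|x'|^2}=\sqrt{1-t^2-|u|^2}$.

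Next I would compute the induced surface measure as in the proof of Lemma \ref{lem2.1}. Choosing an orthonormal basis $e_1,\dots,e_{n-1}$ of $\theta^\perp$, the tangent vectors of the parametrization are $\bigl(e_i,\,-u_i/\sqrt{1-t^2-|u|^2}\bigr)$, so the Gram matrix has the form $I+aa^\top$ with $a=u/\sqrt{1-t^2-|u|^2}$. Its determinant equals $1+|a|^2=(1-t^2)/(1-t^2-|u|^2)$, which gives
\[
d_\tau\sig(x)=\frac{\sqrt{1-t^2}}{\sqrt{1-|x'|^2}}\,du.
\]
Substituting this into the definition of $(V_+f)(\theta,t)$ and using $\vp(x')=f(x',\sqrt{1-|x'|^2})/\sqrt{1-|x'|^2}$ from Lemma \ref{lem2.1}, the integrand collapses to $\sqrt{1-t^2}\,\vp(x')$, while the $u$-domain is exactly the disk $\{x'\in B_n:x'\cdot\theta=t\}$, which is the support of the hyperplane integral defining $(R\vp)(\theta,t)$. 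This gives (\ref{vt4}). When $f\in C(S^n_+)$, both sides depend continuously on $(\theta,t)\in\frC_n$, so the identity holds pointwise.

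The only nontrivial bookkeeping step is the Gram determinant calculation; everything else is just unwinding definitions. A slicker alternative would be to observe that $\tau$ is an $(n-1)$-sphere of radius $\sqrt{1-t^2}$ lying in the hyperplane $x'\cdot\theta=t$, apply Lemma \ref{lem2.1} (after a dilation by $\sqrt{1-t^2}$) to its upper hemisphere, and extract the factor $\sqrt{1-t^2}$ directly from the scaling of the induced surface measure against Lebesgue measure on the disk. I would include the direct Gram-determinant argument for transparency, since the proof of Lemma \ref{lem2.1} was also carried out by the same kind of explicit Jacobian computation.
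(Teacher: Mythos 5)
Your proof is correct and is essentially the same as the paper's: the paper reduces to $\theta=e_n$ by rotation invariance and then performs exactly your Jacobian computation (the factor $\sqrt{1+\sum_j(\partial x_{n+1}/\partial y_j)^2}$ there is precisely your Gram determinant $\sqrt{1+|a|^2}=\sqrt{(1-t^2)/(1-t^2-|u|^2)}$ for the graph parametrization of the slice). The only differences are cosmetic: you work with a general $\theta$ via an orthonormal basis of $\theta^\perp$, and you make explicit the continuity remark for the pointwise case.
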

\begin{proof} By rotation invariance, it suffices to prove the lemma for $\theta=e_n=(0, \cdots, 1,0)$, when $x'=te_n +y$, $\, y\in \bbr^{n-1}$, $\, |y|< r=\sqrt{1-t^2}$. As in the proof of Lemma \ref {lem2.1},
\bea
(V_+f)(e_n, t)\!\!&=& \!\!\intl_{|y|< r}\!\! f\left(te_n \!+\!y, \sqrt{r^2\!-\!|y|^2}\right)  \sqrt{1+\sum\limits_{j=1}^{n-1} \left (\frac{\partial \left(\sqrt{r^2\!-\!|y|^2} \right)}{\partial y_j}\right )^2}\!dy\nonumber\\
&\!\!=&\!r\!\intl_{|y|< r} \!\!f\left(te_n \!+\!y, \sqrt{r^2\!-\!|y|^2}\right)\,\frac{dy}{\sqrt{r^2\!-\!|y|^2}}. \nonumber\eea
This gives (\ref{vt4}).
\end{proof}

\begin{remark} \label{rem1}{\rm By (\ref{vt4}), inversion of $V_+$ reduces to the similar problem for the Radon transform $R$. Most of the known inversion formulas for $R$  are applicable in the entire space $\rn$. From now on, when dealing with  $(R\vp) (\theta, t)$, we assume that
$\vp$  extends by zero outside $B_n$, so that $(V_+f)(\theta, t)=(R\vp) (\theta, t)=0$ whenever $|t|>1$.
}
\end{remark}
Lemmas \ref{lem2.1} and \ref{lem2.2} imply the following result.

\begin{theorem} \label {the1} A function $f\in L^1 (S^n_+)$ can be recovered from its vertical slice transform $V_+f$ by the formula
\be\label{vt5} f(x',x_{n+1})=x_{n+1}\, (R^{-1} \Phi)(x'), \quad \Phi(\theta, t)=\frac{1}{\sqrt {1\!-\!t^2}}\, (V_+f)(\theta, t),\ee
where $R^{-1}$ is the inverse Radon transform over hyperplanes in $\rn$.
\end{theorem}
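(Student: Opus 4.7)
The formula should fall out of composing Lemmas \ref{lem2.1} and \ref{lem2.2} and then inverting the hyperplane Radon transform. The idea is that Lemma \ref{lem2.2} identifies the weighted slice data $\Phi(\theta,t)=(V_+f)(\theta,t)/\sqrt{1-t^2}$ with $(R\varphi)(\theta,t)$, where $\varphi$ is precisely the function introduced in Lemma \ref{lem2.1}, and Lemma \ref{lem2.1} in turn connects $\varphi$ back to $f$ on the hemisphere via $\varphi(x')=f(x',\sqrt{1-|x'|^2})/\sqrt{1-|x'|^2}$.

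First I would invoke Lemma \ref{lem2.2} to rewrite $\Phi=R\varphi$ on $\frC_n$. By the convention of Remark \ref{rem1}, $\varphi$ is extended by zero outside $B_n$, so $R\varphi$ is defined on all of $S^{n-1}\times \bbr$ and vanishes for $|t|\ge 1$, consistent with the support of $V_+f$. Since $f\in L^1(S^n_+)$, Lemma \ref{lem2.1} gives $\varphi\in L^1(\bbr^n)$ (with compact support in $B_n$). Second I would apply the inverse hyperplane Radon transform on $\bbr^n$ to obtain $\varphi(x')=(R^{-1}\Phi)(x')$ for a.e.\ $x'\in\bbr^n$. Third, by the very definition of $\varphi$ in Lemma \ref{lem2.1}, substituting $x_{n+1}=\sqrt{1-|x'|^2}$ on $S^n_+$ yields
\[
f(x',x_{n+1})=\sqrt{1-|x'|^2}\,\varphi(x')=x_{n+1}\,(R^{-1}\Phi)(x'),
\]
which is exactly \eqref{vt5}.

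The only real subtlety is how to interpret $R^{-1}$. For a compactly supported $L^1$ function, $\varphi$ is uniquely determined by $R\varphi$, and one may use any of the standard pointwise or distributional inversion formulas (filtered backprojection, Helgason's formula with the Riesz potential on the dual transform, etc.). Remark \ref{rem1} already flags that the entire family of known inversion formulas for $R$ on $\bbr^n$ is at our disposal, so no new inversion theory is required; this is precisely why the author introduced the extension-by-zero convention. No other obstacle arises --- the theorem is essentially a packaging of the two preceding lemmas together with a reference to classical Radon inversion.
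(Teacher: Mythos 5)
Your proposal is correct and matches the paper's own (essentially one-line) argument: the theorem is stated as an immediate consequence of Lemmas \ref{lem2.1} and \ref{lem2.2}, exactly the composition $\Phi=R\varphi$, $\varphi=R^{-1}\Phi$, $f(x',x_{n+1})=x_{n+1}\varphi(x')$ that you spell out. Your remarks on the extension-by-zero convention and the interpretation of $R^{-1}$ are consistent with Remark \ref{rem1} and the discussion following the theorem.
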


There is  a big variety of explicit  formulas for $R^{-1}$; see, e.g.,  \cite{Ru15b} and references therein. For convenience of the reader, below we present some of them adapted to our case.

Let $h\equiv h(\theta, t)=\{x' \in \rn: x' \cdot \theta =t\}$ be a hyperplane in $\rn$ parametrized by $(\theta, t) \in S^{n-1} \times \bbr$.
The {\it dual Radon transform}  of the hyperplane function $\Phi (h) \equiv \Phi(\theta, t)$ is defined by the formula
\be\label{durt}
(R^*\Phi)(x')= \frac{1}{\sig_{n-1}}\intl_{S^{n-1}} \Phi(\theta, x'\cdot \theta)\,d\sig(\theta), \quad x'\in \rn, \ee
and  averages $\Phi$ over the set of all hyperplanes  passing through $x'$.

\subsection{ F. John's Inversion Method.}
 An advantage of this method in comparison with many others is that it can be easily adapted to functions on a ball. Below we present this reasoning following our exposition in \cite[p. 195]{Ru15b} that relies on the original work of F. John \cite [Chapter I]{J}.

The starting point  is the  decomposition of  $|x'|^{2-n}$ in spherical waves. Specifically,
\be  \label {llp9b} |x'|^{2-n}=\frac{a_1}{c_1}\, (-\Del)^{(n-1)/2} \intl_{S^{n-1}} |x'\cdot \theta|\, d\theta, \qquad n=3,5, \ldots\,,\ee
\be  \label {llp9bx}  |x'|^{2-n}=\frac{a_2}{c_2}\, (-\Del)^{(n-2)/2} \intl_{S^{n-1}} \log |x'\cdot \theta|\, d\theta, \qquad n=4,6, \ldots\,.\ee
Here $\Del$ is the Laplace operator in the $x'$-variable,
\[
a_1= \frac {\Gam ((n+1)/2)}{2\pi^{(n-1)/2}}, \qquad   c_1\!=\!-2^{n -2}\pi^{-1/2}\Gam\left (\frac{n\!+\!1}{2}\right )\,\Gam\left (\frac{n\!-\!2}{2}\right ),\]
\be  \label {llp9} a_2 =\frac {\Gam (n/2)}{2\pi^{n/2}},\qquad c_2\!=\!-2^{n -3}\pi^{-1/2}\Gam\left (\frac{n}{2}\right )\,\Gam\left (\frac{n\!-\!2}{2}\right ).\ee

We say that a function $\vp$ on a  set $G \subset \rn$ satisfies the  Lipschitz condition
 with exponent $0<\lam \le 1$ and write $\vp\in \Lip_\lam (G)$ if there is a  constant $A>0$ such that
\be\label{lippnew} |\vp(x')-\vp(y')| \le
A|x'-y'|^\lam \qquad \forall \,x',y' \in G.\ee

\begin{theorem}\label {the2} Let $\Phi=R\vp$, where $\vp\in \Lip_\lam (B_n)$ for some $0<\lam \le 1$.

\noindent {\rm (i)} If $n$ is odd, then
\be\label {i8v} \vp(x')=c_n (-\Del)^{(n-1)/2}(R^*\Phi)(x'),\qquad c_n=\frac{\pi^{1-n/2}}{2^{n-1}\,\Gam (n/2)}.\ee

\noindent {\rm (ii)} If  $n$ is even, then
\be\label {i8ve} \vp(x')= \tilde c_{n} \,(-\Del)^{n/2} (R^*L\Phi)(x'), \ee
\[\tilde c_{n} =\frac{\pi^{(1-n)/2}}{2^{n-1}\,\Gam (n/2)}, \qquad (L\Phi)(\theta,s)=\intl_{-1}^1 \log |s -t| \,\Phi(\theta,t)\, dt.\]
\end{theorem}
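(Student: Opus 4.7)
The plan is to bridge the Newton kernel $|x'|^{2-n}$ and the Radon transform via the plane-wave expansions (\ref{llp9b}) and (\ref{llp9bx}), then exploit the distributional identity $-\Del|x'|^{2-n}=(n-2)\sigma_{n-1}\delta$ to extract $\vp$ on one side and $R^*\Phi$ (resp.\ $R^*L\Phi$) on the other.

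For odd $n$, substituting $x'-y'$ for $x'$ in (\ref{llp9b}), multiplying by $\vp(y')$, integrating in $y'$ and using Fubini together with the translation invariance of $\Del$, I would first obtain
\[
\intl_{\rn}\vp(y')\,|x'-y'|^{2-n}\,dy'=\frac{a_1}{c_1}(-\Del_{x'})^{(n-1)/2}\intl_{S^{n-1}}\intl_{\rn}\vp(y')\,|(x'-y')\cdot\theta|\,dy'\,d\theta.
\]
Slicing $y'=s\theta+z$, $z\perp\theta$, collapses the inner integral to $\int_{\bbr}|x'\cdot\theta-s|\,\Phi(\theta,s)\,ds$. Then I apply one more $-\Del_{x'}$ on both sides. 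The left hand side becomes $(n-2)\sigma_{n-1}\vp(x')$, while the plane-wave identity $\Del_{x'}w(x'\cdot\theta)=w''(x'\cdot\theta)$ combined with $\part_s^2|s-t|=2\delta(s-t)$ reduces the right hand side to $\frac{-2a_1\sigma_{n-1}}{c_1}(-\Del)^{(n-1)/2}(R^*\Phi)(x')$. Solving for $\vp$ and collapsing the constants via (\ref{llp9}) together with $(n-2)\Gam((n-2)/2)=2\Gam(n/2)$ yields $c_n$, proving (i).

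For even $n$, the same procedure starting from (\ref{llp9bx}) produces $\int_{\bbr}\log|x'\cdot\theta-s|\Phi(\theta,s)\,ds$ as the inner integral, which is exactly $(L\Phi)(\theta,x'\cdot\theta)$ thanks to Remark \ref{rem1}. Averaging over $S^{n-1}$ gives $\sigma_{n-1}(R^*L\Phi)(x')$, and applying one more $-\Del_{x'}$ identifies the right hand side with $\frac{a_2\sigma_{n-1}}{c_2}(-\Del)^{n/2}(R^*L\Phi)(x')$, while the left hand side has already produced $(n-2)\sigma_{n-1}\vp(x')$. A direct Gamma-function calculation reduces the resulting constant to $\tilde c_n$, giving (ii).

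The principal obstacle is analytic rather than combinatorial: the non-local operators $(-\Del)^{(n-1)/2}$ and $(-\Del)^{(n-2)/2}$ appearing inside (\ref{llp9b}), (\ref{llp9bx}) must be pulled under the $y'$- and $\theta$-integrals, and the final $-\Del_{x'}$ must then act pointwise rather than merely distributionally. This is exactly what the hypothesis $\vp\in\Lip_\lam(B_n)$ is designed to supply: the compact support in $B_n$ makes every integral absolutely convergent, while the Hölder bound (\ref{lippnew}) provides just enough regularity on $\vp$, $\Phi=R\vp$, and the dual transforms $R^*\Phi$, $R^*L\Phi$ to legitimize each interchange in the classical sense, in line with F.~John's original exposition.
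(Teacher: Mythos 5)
Your argument is, in substance, the paper's own proof. The author likewise starts from the plane--wave decompositions (\ref{llp9b})--(\ref{llp9bx}), convolves with $\vp$, slices the $y'$-integral into $\intl_{-1}^1 |x'\cdot \theta -t|\,(R_\theta\vp)(t)\,dt$ (resp.\ its logarithmic analogue), and inverts the Newtonian potential via $\vp=-\Del I^{2}\vp$ --- which is exactly your identity $-\Del|x'|^{2-n}=(n-2)\sig_{n-1}\delta$, since $\gam_n(2)=(n-2)\sig_{n-1}$. Your collapse of the odd-case kernel through $\partial_z^2|z-t|=2\delta(z-t)$ is the distributional form of the paper's explicit second-derivative computation of $\intl_{-1}^z(z-t)(R_\theta\vp)(t)\,dt+\intl_z^1(t-z)(R_\theta\vp)(t)\,dt$, and your constant bookkeeping, including $(n-2)\Gam((n-2)/2)=2\Gam(n/2)$, matches.

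The one genuine gap is part (ii) at $n=2$, which the theorem covers. There the kernel $|x'|^{2-n}$ is the constant $1$, the identity $-\Del|x'|^{2-n}=(n-2)\sig_{n-1}\delta$ degenerates, and your left-hand side $(n-2)\sig_{n-1}\vp(x')$ vanishes identically, so your derivation reduces to $0=0$ and recovers nothing. The paper handles $n=2$ separately by replacing the Newtonian potential with the logarithmic one: it starts from the plane-wave decomposition (\ref{ml34}) of $\log|x'|$ (whose additive constant is annihilated by the Laplacian) and uses $\Del\log|x'|=2\pi\delta$ in $\bbr^2$; you need to add this branch to make (ii) complete. A minor quibble: for the relevant parities every power $(-\Del)^{(n-1)/2}$, $(-\Del)^{(n-2)/2}$, $(-\Del)^{n/2}$ occurring here is an integer power of the Laplacian, hence a local differential operator, so the analytic issue you flag at the end is only the routine justification of differentiating under the integral sign, not an interchange involving nonlocal operators.
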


\begin{proof}
{\rm (i)}   We combine (\ref{llp9b}) with the classical equality $\vp (x')=-\Del I^{2} \vp(x')$, where
\be\label {hi8ve} (I^2 \vp) (x')=  \frac{1}{\gam_n (2)} \intl_{B_n}  \frac{\vp(y')\,dy}{|x'-y'|^{n-2}}, \quad \gam_n (2)=\frac{4\pi^{n/2}}{\Gamma((n-2)/2)},\ee
 is the Newtonian potential  and  $x'$ lies in the interior of $B_n$; see, e.g., \cite[p. 231]{Mi2}.  Let $\lam_1=a_1/c_1\, \gam_n (2)$. Then, using Remark \ref {rem1}, we have
\bea \vp(x')&=& \lam_1 \,(-\Del)^{(n+1)/2} \intl_{B_n}\vp(y')\, dy\intl_{S^{n-1}} |(x'-y')\cdot \theta|\, d\theta\nonumber\\
&=& \lam_1 \,(-\Del)^{(n+1)/2}\intl_{S^{n-1}} d\theta\intl_{-\infty}^\infty |t|\, (R_\theta \vp)(x'\cdot \theta -t)\, dt\nonumber\\
&=& \lam_1 \,(-\Del)^{(n+1)/2}  \intl_{S^{n-1}} d\theta\intl_{-1}^1 |x'\cdot \theta -t| \,(R_\theta \vp)(t)\, dt.\nonumber\eea
Observing that
\bea
&&\Del \intl_{-1}^1 |x'\cdot \theta -t| \,(R_\theta \vp)(t)\, dt\nonumber\\
&&=\frac{d^2}{dz^2}\,\Big [\intl_{-1}^z (z-t)\,(R_\theta \vp)(t)\, dt+\intl_{z}^1 (t-z)\,(R_\theta \vp)(t)\, dt\Big ]_{z=x'\cdot \theta }=2(R_\theta \vp)(x'\cdot \theta)\nonumber\eea
(here we use an obvious formula $\Del_{x'}' [g (x'\cdot \theta)]=g''(x'\cdot \theta)$),  we get
\[\vp(x')=-2\lam_1 (-\Del)^{(n-1)/2} \ \intl_{S^{n-1}}(R_\theta \vp)(x'\cdot \theta)\, d\theta.\]
This coincides with (\ref{i8v}).

\noindent {\rm (ii)} For $n$  even, we  start with (\ref{llp9bx}) if $n>2$, and with the  formula
\be \label {ml34}  \log |x'|=\frac {1}{2\pi} \!\intl_{S^{1}}\!  \log |x'\cdot \theta|\, d\theta -\frac {1}{\pi^{1/2}}
\,\frac{d}{d\a}\frac{\Gam (\a/2)}{\Gam ((1+\a)/2)}\Bigg |_{\a=1}  , \ee
  if $n=2$ (cf. \cite[formula (4.8.70)]{Ru15b}). Let \[\lam_2=\frac{a_2}{c_2\, \gam_n (2)}=\frac{1}{2^n\, \pi^{n-1/2}}.\] Then
\bea \vp(x')&=& \lam_2 \,(-\Del)^{n/2} \intl_{\bbr^n}\vp(y')\, dy'\intl_{S^{n-1}} \log |(x'-y')\cdot \theta|\, d\theta\nonumber\\
&=& \lam_2 \,(-\Del)^{n/2} \intl_{S^{n-1}} d\theta\intl_{-\infty}^\infty \log |t|\, (R_\theta \vp)(x'\cdot \theta -t)\, dt\nonumber\\
&=& \lam_2 \,(-\Del)^{n/2}   \intl_{S^{n-1}} d\theta\intl_{-1}^1 \log |x'\cdot \theta -t| \,(R_\theta \vp)(t)\, dt\nonumber\\
&=& \frac{\pi^{(1-n)/2}}{2^{n-1}\Gam (n/2)}\,(-\Del)^{n/2} (R^*LR\vp)(x').\nonumber\eea
\end{proof}

Theorem \ref{the2} gives the following result for $V_+f$.
\begin{theorem}\label {the3} Given an even function $f$ on $S^n$, suppose that
\be\label{cond1}
\frac{f(x', \sqrt{1-|x'|^2})}{\sqrt{1-|x'|^2}} \in \Lip_\lam (B_n)\quad \text {for some $0<\lam \le 1$}.\ee
Then $f$ can be recovered from its vertical slice transform $V_+f$ by the formula  $f(x',x_{n+1})=|x_{n+1}|\,\vp(x')$,
where $\vp$ is defined by  (\ref{i8v}) and (\ref{i8ve}) with $\Phi(\theta, t)=(1\!-\!t^2)^{-1/2} (V_+f)(\theta, t)$.

\end{theorem}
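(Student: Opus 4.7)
The plan is to chain together the two preceding theorems. Theorem \ref{the1} reduces inversion of $V_+$ to inversion of the hyperplane Radon transform $R$ via the identity $f(x', x_{n+1}) = x_{n+1}\,(R^{-1}\Phi)(x')$ on $S^n_+$, and Theorem \ref{the2} supplies the two explicit formulas for $R^{-1}$ under a Lipschitz hypothesis that is tailor-made for the present setup.

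First I would invoke Theorem \ref{the1} to obtain the reconstruction $f(x',x_{n+1}) = x_{n+1}(R^{-1}\Phi)(x')$ on the upper hemisphere, with $\Phi(\theta,t)=(1-t^2)^{-1/2}(V_+f)(\theta,t)$. Since $f$ is even in the last variable by the standing assumption, this upgrades to $f(x', x_{n+1}) = |x_{n+1}|\, (R^{-1}\Phi)(x')$ valid on all of $S^n$. The problem thus reduces to evaluating $\vp := R^{-1}\Phi$ in concrete terms.

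Next, by Lemma \ref{lem2.2} together with the zero-extension convention of Remark \ref{rem1}, we have $\Phi = R\vp$ for the specific function $\vp(x') = f(x', \sqrt{1-|x'|^2})/\sqrt{1-|x'|^2}$ defined on $B_n$ and extended by zero. The hypothesis (\ref{cond1}) asserts precisely that this $\vp$ lies in $\Lip_\lambda(B_n)$, which is exactly the admissibility condition in Theorem \ref{the2}. Applying Theorem \ref{the2}(i) when $n$ is odd yields formula (\ref{i8v}), while Theorem \ref{the2}(ii) for even $n$ yields (\ref{i8ve}); multiplying the resulting $\vp(x')$ by $|x_{n+1}|$ reconstructs $f$.

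The proof is essentially a syntactic concatenation of the earlier results, and I do not anticipate any substantial obstacle. The only point that requires verification is that the $\vp$ appearing in the Radon-inversion formulas of Theorem \ref{the2} coincides with the $\vp$ built from $f$ in Lemma \ref{lem2.1}; this compatibility is furnished by Lemma \ref{lem2.2}, so no further analytic work (such as boundary-regularity estimates at $\partial B_n$, or justification of interchange of differentiation and integration beyond what is already embedded in Theorem \ref{the2}) is required.
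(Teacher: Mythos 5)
Your proposal is correct and follows exactly the route the paper intends: Theorem \ref{the1} (via Lemmas \ref{lem2.1}, \ref{lem2.2} and Remark \ref{rem1}) reduces the problem to inverting $R$ on $\Phi=R\vp$, and the Lipschitz hypothesis (\ref{cond1}) is precisely what Theorem \ref{the2} requires to produce $\vp$ by (\ref{i8v}) or (\ref{i8ve}); the evenness of $f$ then upgrades $x_{n+1}$ to $|x_{n+1}|$ on all of $S^n$. The paper offers no further argument beyond this concatenation, so nothing is missing.
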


\subsection{Hypersingular integrals.}

Hypersingular integrals have proved to be a powerful tool for explicit inversion of operators of the potential type \cite{Ru96, Sa5}.
 Here we  follow \cite[Theorem 4.87]{Ru15b}.  Let
\[(\Del^\ell_{y'} \vp)(x')=\sum_{k=0}^\ell (-1)^k {\ell \choose k}
\vp(x'-ky')\]  be the finite difference of $\vp$ of order $\ell$
with step $y'$ at the point $x'$. We  denote
\[ c_n=2^{n-1}\pi^{n/2-1} \Gam (n/2), \qquad B_l (\a)=\sum_{k=0}^l (-1)^k {l \choose
k} k^{ \a},\]
\be\label{pech} d_{n,\ell}(\a)=\frac{\pi^{n/2}}{2^\a \Gamma ((n+\a)/2)} \left\{ \!
 \begin{array} {ll} \! \Gamma (-\a/2) B_l (\a), \!  & \mbox{if $ \a \neq 2, 4, 6, \ldots $,}\\
{}\\
\displaystyle{\frac{2 (-1)^{\a/2-1}}{(\a/2)!} \frac {d}{d\a} B_l (\a)}, & \mbox{if $ \a = 2, 4, 6, \ldots \, .$}\\
\end{array}
\right.\ee

The following result is a consequence of (\ref{vt5}) and \cite[Theorem 4.87]{Ru15b}.

\begin{theorem} \label {the4} Let $V_+f$ be the vertical slice transform of a function  $f\in L^1 (S^n_+)$ and let  $\Phi(\th, t)\!=\!(1-t^2)^{-1/2} (V_+f)(\th, t)$, as in (\ref{vt5}). Suppose that $\ell > n-1$ if $n$ is odd, and $\ell= n-1$
if $n$ is even. Then $f$ can be reconstructed by the formula

\be\label{pechka}
f(x',x_{n+1})=c\,x_{n+1}\intl_{\rn}
\frac{(\Del^\ell_{y'} R^*\Phi)(x')}{|y'|^{2n-1}}\, dy', \qquad c=\frac{c_n}{d_{n,\ell}(n-1)},\ee
where $\intl_{\rn} (...) =\lim\limits_{\e \to 0}\intl_{|y|>\e}(...)$ in the norm  of the space $L^1 (\rn)$  and in the a.e. sense.
 The constant $d_{n,\ell}(n-1)$ is defined by
 (\ref{pech})   with $\a=n-1$.
 If $f$ belongs to $C(S^n_+)$ and is supported away from the boundary, the limit is uniform on $\bbr^n$.
\end{theorem}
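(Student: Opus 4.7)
The plan is to derive (\ref{pechka}) by directly composing two ingredients: Theorem \ref{the1}, which reduces inversion of the vertical slice transform $V_+$ to inversion of the hyperplane Radon transform $R$ on $\rn$, and the hypersingular-integral inversion of $R$ given as Theorem 4.87 in \cite{Ru15b}. No new Fourier or hypersingular analysis is required; the task reduces to bookkeeping of constants and convergence modes.

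First, to $f\in L^1 (S^n_+)$ I would attach the function $\vp(x')=f(x',\sqrt{1-|x'|^2})/\sqrt{1-|x'|^2}$ on $B_n$, extended by zero outside $B_n$ as in Remark \ref{rem1}. Lemma \ref{lem2.1} guarantees $\vp \in L^1 (\rn)$, and Lemma \ref{lem2.2} gives $R\vp=\Phi$ on $\frC_n$, with $\Phi(\th,t)=(1-t^2)^{-1/2}(V_+f)(\th,t)$ as in the statement of Theorem \ref{the4}. By Theorem \ref{the1}, inversion of $V_+$ then amounts to recovering $\vp=R^{-1}\Phi$ and forming $f(x',x_{n+1})=x_{n+1}\,\vp(x')$; since $f$ is assumed even in the last variable, the factor $x_{n+1}$ may be written as $|x_{n+1}|$ and the formula extends from $S^n_+$ to all of $S^n$.

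Next I would apply Theorem 4.87 of \cite{Ru15b} to $\vp$. That result asserts that for $\vp\in L^1 (\rn)$ and for $\ell$ in the parity-dependent range prescribed in Theorem \ref{the4}, the identity
\[
\vp(x')=\frac{c_n}{d_{n,\ell}(n-1)}\intl_{\rn}\frac{(\Del^\ell_{y'} R^*R\vp)(x')}{|y'|^{2n-1}}\,dy'
\]
holds in the $L^1 (\rn)$-norm and almost everywhere, with the integral understood as $\lim_{\e\to 0}\intl_{|y'|>\e}$. Substituting $R\vp=\Phi$ and multiplying both sides by $x_{n+1}$ yields (\ref{pechka}) together with the stated convergence modes.

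The uniform-convergence clause for $f\in C(S^n_+)$ supported away from the equator follows because under that hypothesis $\vp$ is continuous with compact support in the open ball $B_n$, which is precisely the regularity level at which Theorem 4.87 of \cite{Ru15b} yields uniform convergence on $\rn$. The only subtlety is verifying that the nonlinear reparametrization $x_{n+1}\mapsto \sqrt{1-|x'|^2}$ preserves the relevant regularity when transferring $f$ on $S^n_+$ to $\vp$ on $B_n$: this is immediate away from the equator, and the weight $(1-|x'|^2)^{-1/2}$ is harmless in the $L^1$ regime by Lemma \ref{lem2.1}. I would therefore expect this verification to be routine rather than a genuine obstacle, and the proof itself to be essentially a one-line citation after the setup above.
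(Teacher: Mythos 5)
Your proposal is correct and follows exactly the route the paper takes: the paper proves Theorem \ref{the4} by the one-line observation that it is a consequence of the reduction formula (\ref{vt5}) (i.e., $f(x',x_{n+1})=x_{n+1}\vp(x')$ with $R\vp=\Phi$ and $\vp$ extended by zero outside $B_n$) combined with the hypersingular inversion of $R$ in \cite[Theorem 4.87]{Ru15b}. Your added remarks on the convergence modes and on why compact support away from the equator gives a continuous, compactly supported $\vp$ (hence uniform convergence) are accurate elaborations of what the paper leaves implicit.
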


\begin {example} {\rm If $n=2$, then   \be
f(x',x_{n+1})=\frac{x_{n+1}}{4\pi}\intl_{\bbr^2}\frac{(R^*\Phi)(x')-(R^*\Phi)(y')}{|x'-y'|^3}\,
dy'.\ee
}
 \end{example}

\subsection{Singular Value Decomposition} By making use of the connection (\ref{vt4}), known results about  singular value decomposition of the Radon transform on the unit ball  can be adapted  for the vertical slice transform. Our reasoning relies on \cite[Subsection 4.4.4]{Ru15b}, where the corresponding results for  the Radon transform are presented in detail. Some information  for the non-weighted case $\vp \in L^2(B_n)$  can  be found in  \cite[pp. 17, 95] {Na1}.

Given a nonnegative measurable function $W$ on a measure space $X$, the corresponding weighted space $L^2 (X;W)$  is defined by
\[
L^2 (X;W)=\left \{ f: ||f||_{L^2 (X;W)}= \left (\intl_X |f(x)|^2 \,W(x)\, dx\right )^{1/2}<\infty \right \}.\]
We will be dealing with different weighted spaces of this kind. Let
\[ W(x')=(1-|x'|^2)^{n/2 -\lam}, \quad \tilde W(x)=x_{n+1}^{n-2\lam-1}; \quad  x'\in B_n, \quad  x\in S^n_+; \]
\[w(t)=(1-t^2)^{-1/2 -\lam},\quad \tilde  w(t)=(1-t^2)^{-1/2 -\lam}, \quad  t\in (-1,1);\]
\[\frC_n=\{(\th,t): \th \in S^{n-1},  \,t\in (-1,1)\};\]
\[
L^2_e (\frC_n;  w)=\{\Phi\in  L^2 (\frC_n;  w): \, \Phi(-\th, -t)= \Phi(\th, \t)\};\]
\[
L^2_e (\frC_n; \tilde w)=\{  F\in  L^2 (\frC_n; \tilde w):\,  F(-\th, -t)= F(\th, \t)\}.\]

Everywhere in the following we assume
\[\lam> n/2 -1.\]
This assumption is motivated by the following lemmas.
\begin{lemma} \cite[Example 4.36]{Ru15b} If $\lam> n/2 -1$, then the Radon transform $R$ is a linear bounded operator from $ L^2(B_{n}; W)$ to $L^2_e(\frC_n; w)$ with the norm
\be\label{cfa}  ||R||=\Bigg (\frac{\pi^{(n-1)/2}\, \Gam (\lam -n/2 +1)}{\Gam (\lam +1/2)}\Bigg )^{1/2}.\ee
\end{lemma}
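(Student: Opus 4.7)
The plan is to produce an explicit singular value decomposition of $R$ adapted to the weighted inner products of $L^2(B_n;W)$ and $L^2_e(\frC_n;w)$, and then to read off the operator norm as the supremum of the singular values. The method follows the classical SVD of the Radon transform on the unit ball due to Davison and Louis; see \cite{Na1} for an exposition.

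First, I would decompose both spaces using spherical harmonics on $\sn$. Writing
\[\vp(x') = \sum_{k,\ell} g_{k,\ell}(|x'|)\, Y_{k,\ell}(x'/|x'|), \quad F(\th,t)=\sum_{k,\ell}G_{k,\ell}(t)\,Y_{k,\ell}(\th),\]
one checks that $R$ preserves this decomposition, sending the $(k,\ell)$-piece $g_{k,\ell}(r)\,Y_{k,\ell}$ to $(A_k g_{k,\ell})(t)\,Y_{k,\ell}(\th)$ for a one-dimensional Abel-type operator $A_k$ whose explicit form follows by restricting the integral over $\{x'\cdot\th=t\}\cap B_n$ to radial slices. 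The parity $(A_k g)(-t)=(-1)^k(A_k g)(t)$ exactly matches the constraint $F(-\th,-t)=F(\th,t)$ in $L^2_e(\frC_n;w)$ after using $Y_{k,\ell}(-\th)=(-1)^k Y_{k,\ell}(\th)$.

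Second, within each $(k,\ell)$-sector I would diagonalize $A_k$ via orthogonal polynomials. The natural orthonormal basis of the $(k,\ell)$-subspace of $L^2(B_n;W)$ consists of the Zernike-type functions
\[\Psi_{j,k,\ell}(x') = c_{j,k}\, |x'|^k\, P_j^{(n/2-\lam,\, k+n/2-1)}(2|x'|^2-1)\,Y_{k,\ell}(x'/|x'|),\qquad j\ge 0,\]
whose orthogonality follows from the standard orthogonality of Jacobi polynomials $P_j^{(\alpha,\beta)}$ on $[0,1]$ with weight $(1-s)^{n/2-\lam}s^{k+n/2-1}$ (after $s=|x'|^2$). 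A direct computation, using the integral form of $R$ on spherical-harmonic pieces, shows that these map to multiples of Gegenbauer polynomials $C_{2j+k}^{(\lam+1/2)}(t)$ times $Y_{k,\ell}(\th)$, producing an orthonormal basis $\{\Phi_{j,k,\ell}\}$ of $L^2_e(\frC_n;w)$ together with $R\Psi_{j,k,\ell}=\sig_{j,k}\Phi_{j,k,\ell}$, where the $\sig_{j,k}$ are explicit ratios of Gamma functions read off from the Jacobi-to-Gegenbauer intertwining.

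Third, the operator norm satisfies $\|R\|=\sup_{j,k}\sig_{j,k}$, and a direct comparison of Gamma-function expressions yields the claimed closed form. An independent sanity check is the Schur-type identity
\[\intl_{\{x'\cdot\th=t\}\cap B_n}(1-|x'|^2)^{\lam-n/2}\,d_{\th,t}x' = \frac{\pi^{(n-1)/2}\Gam(\lam-n/2+1)}{\Gam(\lam+1/2)}\,(1-t^2)^{\lam-1/2},\]
obtained by a change of variables on the $(n-1)$-ball of radius $\sqrt{1-t^2}$ and the Beta integral, which pairs naturally with $w(t)=(1-t^2)^{-1/2-\lam}$. The assumption $\lam>n/2-1$ is precisely what makes the surface weight $(1-|x'|^2)^{\lam-n/2}$ locally integrable, and hence what gives meaning to both the SVD and the displayed norm formula. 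The main obstacle will be bookkeeping: matching the normalization constants $c_{j,k}$ on the ball side with the Gegenbauer normalization on the cylinder side, verifying completeness of $\{\Phi_{j,k,\ell}\}$ in $L^2_e(\frC_n;w)$ (which follows from density of polynomials in the relevant weighted $L^2$), and checking carefully that the supremum of $\sig_{j,k}$ over $(j,k)$ reduces to the stated expression.
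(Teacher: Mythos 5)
The paper offers no proof of this lemma at all --- it is quoted from \cite[Example 4.36]{Ru15b} --- so the comparison can only be with what that proof must contain. Judged on its own terms, your SVD plan is workable in outline but has one concrete defect and defers the decisive computation. The defect: your ball-side functions $\Psi_{j,k,\ell}=c_{j,k}|x'|^k P_j^{(n/2-\lam,\,k+n/2-1)}(2|x'|^2-1)Y_{k,\ell}$ do form an orthonormal basis of $L^2(B_n;W)$, but they are \emph{not} singular functions of $R$. The correct singular functions carry the extra factor $W^{-1}(x')=(1-|x'|^2)^{\lam-n/2}$ and the Jacobi parameter $\lam-n/2$ (opposite in sign to yours); compare (\ref{hva4cu2}). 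Without that factor, $R\Psi_{j,k,\ell}$ is $(1-t^2)^{(n-1)/2}$ times a polynomial of degree $2j+k$ in $t$, not a multiple of a single Gegenbauer polynomial, and the images of your orthogonal family are not orthogonal on the cylinder; the asserted intertwining fails and no SVD is obtained. Your own ``sanity check'' identity is exactly what explains why the factor is indispensable: it is $R[W^{-1}]$ that produces the profile $(1-t^2)^{\lam-1/2}$ matched to the cylinder weight. Second, even with the correct singular system (\ref{hva4cu2})--(\ref{hva4cu3}), the step ``$\|R\|=\sup_{j,k}\sig_{j,k}$ and a comparison of Gamma functions yields the claim'' is the entire content of the lemma: one must compute the $\sig_{j,k}$ (they are the $s_\nu$ of (\ref{98frdv})), prove by a ratio/monotonicity argument in both indices (using $\lam>n/2-1$) that the supremum is attained at $m=k=0$, and reduce that value via the Legendre duplication formula. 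None of this is carried out. You would also have to fix normalizations: the constant in (\ref{cfa}) corresponds to the measure $\sig_{n-1}^{-1}\,d\th\,dt$ on $\frC_n$ and to the exponent $(1-t^2)^{1/2-\lam}$ in $w$ (as actually used in (\ref{vt4a2})); with other conventions the displayed norm is off by a factor involving $\sig_{n-1}$.

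There is also a much shorter route hiding inside your sanity check, which makes the full SVD unnecessary. On each slice write $\vp=(\vp W^{1/2})\cdot W^{-1/2}$ and apply the Cauchy--Schwarz inequality:
\[
|(R\vp)(\th,t)|^2\;\le\;\Big(\intl_{x'\cdot\th=t}|\vp|^2\,W\,d_{\th,t}x'\Big)\cdot\frac{\pi^{(n-1)/2}\Gam(\lam-n/2+1)}{\Gam(\lam+1/2)}\,(1-t^2)^{\lam-1/2}.
\]
Multiplying by $(1-t^2)^{1/2-\lam}$, integrating in $t$ and averaging over $\th$ gives the upper bound for $\|R\|^2$ with exactly the displayed constant, by Fubini. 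Equality holds in Cauchy--Schwarz on every slice simultaneously for $\vp=W^{-1}$, which belongs to $L^2(B_n;W)$ precisely when $\lam>n/2-1$; a one-line Beta-integral computation confirms $\|RW^{-1}\|^2/\|W^{-1}\|^2$ equals the same constant, so the bound is attained and the norm is as stated. This argument is elementary, identifies the extremizer (which is, up to a constant, your lowest singular function $\eta_{(0,\mu,0)}$), and avoids all of the Jacobi--Gegenbauer bookkeeping.
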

  Note that the assumption $\lam> n/2 -1$ agrees with the argument of the gamma function $\Gam (\lam -n/2 +1)$.

A similar result for $V_+f$  follows by simple calculation.

\begin{lemma} \label {lo} If $\lam> n/2 -1$, then the vertical slice transform  $V_+$
 is a linear bounded operator from $ L^2(S^n_+; \tilde W)$ to $L^2_e(\frC_n; \tilde w)$ and $ ||V_+||=||R||$, as in (\ref{cfa}).
\end{lemma}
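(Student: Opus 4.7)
The plan is to factor $V_+$ through the Euclidean Radon transform via the pointwise identity (\ref{vt4}) and then invoke the preceding lemma. Concretely, I would write $V_+ = M_{\sqrt{1-t^2}}\circ R\circ J$, where $J$ sends an even $f$ on $S^n_+$ to $\vp(x')=f(x',\sqrt{1-|x'|^2})/\sqrt{1-|x'|^2}$ on $B_n$, and $M_g$ denotes multiplication by the function $g(t)$ on $\frC_n$. The goal is to show that each of $J$ and $M_{\sqrt{1-t^2}}$ is an isometric isomorphism between the corresponding weighted $L^2$ spaces, so that the norm of $V_+$ inherits exactly the norm of $R$ from (\ref{cfa}).

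First I would verify that $J$ is an isometric isomorphism from $L^2(S^n_+;\tilde W)$ onto $L^2(B_n;W)$. Applying Lemma \ref{lem2.1} to the integrand $|f(x)|^2\tilde W(x)$, the Jacobian factor $1/\sqrt{1-|x'|^2}$, the boundary value $\tilde W=(1-|x'|^2)^{(n-2\lam-1)/2}$, and the identity $|f|^2=(1-|x'|^2)|\vp|^2$ combine to produce exactly the weight $W(x')=(1-|x'|^2)^{n/2-\lam}$. Bijectivity is immediate from the explicit inverse $f(x',x_{n+1})=x_{n+1}\,\vp(x')$. This is the one step that requires real care: it is where the hypothesis $\lam>n/2-1$ enters, since only then does the exponent of $1-|x'|^2$ keep the integrand locally integrable near $|x'|=1$ and the map $J$ makes sense on all of $L^2(S^n_+;\tilde W)$.

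Next I would handle the image side. The identity $(V_+f)(\th,t)=\sqrt{1-t^2}\,(R\vp)(\th,t)$ from Lemma \ref{lem2.2} gives $|V_+f|^2=(1-t^2)|R\vp|^2$ pointwise on $\frC_n$; the weights $w$ and $\tilde w$ are calibrated precisely so that the factor $(1-t^2)$ is absorbed and $M_{\sqrt{1-t^2}}$ becomes an isometric isomorphism $L^2_e(\frC_n;w)\to L^2_e(\frC_n;\tilde w)$. The evenness constraint on $\frC_n$ is preserved automatically because $(\th,t)$ and $(-\th,-t)$ parametrize the same vertical hyperplane section of $S^n$, so that $(V_+f)(-\th,-t)=(V_+f)(\th,t)$.

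Chaining the three pieces yields
\[
\|V_+f\|_{L^2_e(\frC_n;\tilde w)} \,=\, \|R\vp\|_{L^2_e(\frC_n;w)} \,\le\, \|R\|\,\|\vp\|_{L^2(B_n;W)} \,=\, \|R\|\,\|f\|_{L^2(S^n_+;\tilde W)},
\]
so $\|V_+\|\le\|R\|$. Since $J$ and $M_{\sqrt{1-t^2}}$ are surjective isometries, pulling back any near-extremizing $\vp$ for $R$ through $J^{-1}$ produces an $f$ that achieves the same ratio for $V_+$, which gives the matching lower bound and hence the equality $\|V_+\|=\|R\|$ with the explicit value from (\ref{cfa}).
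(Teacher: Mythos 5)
Your proposal is correct and follows essentially the same route as the paper: the paper likewise establishes the two weighted isometries $\|\vp\|_{L^2(B_n;W)}=\|f\|_{L^2(S^n_+;\tilde W)}$ and $\|R\vp\|_{L^2_e(\frC_n;w)}=\|V_+f\|_{L^2_e(\frC_n;\tilde w)}$ and then transfers the norm of $R$ (the factorization $V_+=\b R\a^{-1}$ is made explicit in the next lemma of the paper). One small correction: the change-of-variables isometry for $J$ holds for every $\lam$, so the hypothesis $\lam>n/2-1$ is not what makes $J$ well defined; it is needed only so that the cited norm formula (\ref{cfa}) for $R$ (with its factor $\Gam(\lam-n/2+1)$) applies.
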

\begin{proof}
Let $\vp (x')=(1-|x'|^2)^{-1/2}\, f(x', (1-|x'|^2)^{1/2})$, as in (\ref{vt2}). Then (cf. the proof of Lemma \ref{lem2.1}),
\bea  ||\vp||^2_{L^2 (B_n;W)}&=&\intl_{B_n} |\vp(x')|^2 (1-|x'|^2)^{n/2 -\lam}\, dx'\nonumber\\
\label{vt4a1} &=&\intl_{S^n_+} |f(x)|^2 x_{n+1}^{n-2\lam-1}\, d\sig (x)= ||f||^2_{L^2 (S^n_+; \tilde W)}.\eea
Further, by (\ref {vt4}), $(R\vp) (\theta, t) = (V_+f)(\theta, t)(1-t^2)^{-1/2}$. Therefore,
\bea
&&||R\vp||^2_{L^2_e(\frC_n; w)}=\intl_{\frC_n} |(R\vp) (\theta, t)|^2 \, (1-t^2)^{1/2 -\lam} \, d\th dt\nonumber\\
\label{vt4a2}&&\intl_{\frC_n} |(V_+f)(\theta, t)|^2 \, (1-t^2)^{-1/2 -\lam} \, d\th dt=||V_+f||^2_{L^2_e(\frC_n; \tilde w)}.\eea
Because $||R\vp||^2_{L^2_e(\frC_n; w)}\le ||R||\, ||\vp||^2_{L^2 (B_n;W)}$, the result follows.
\end{proof}

The equalities (\ref{vt4a1}) and (\ref{vt4a2}) yield the following statements.

\begin{lemma} \label {lo1} ${}$

\noindent {\rm (i)} The maps
\[ \a:  L^2 (B_n;W) \to  L^2 (S^n_+;\tilde W),  \qquad  (\a \vp)(x) =x_{n+1} \vp(x'),\]
and
\[ \b: L^2_e (\frC_n; w) \to L^2_e (\frC_n; \tilde w), \qquad  (\b \Phi) (\th, t)= (1-t^2)^{1/2}\, \Phi (\theta, t),  \]
are isometric isomorphisms, so that
\be\label{kpp}  V_+f =\b R \a^{-1} f.\ee

\noindent {\rm (ii)} A system of functions $\{\eta_\nu\}_{\nu \in \N}$ is an orthonormal basis of $L^2 (B_n;W)$ if and only if
 a system  $\{\tilde\eta_\nu\}_{\nu \in \N}$ with $\tilde \eta_\nu (x)= x_{n+1}\eta_\nu (x')$ is an orthonormal basis of $L^2 (S^n_+;\tilde W)$.

\noindent {\rm (iii)} A system of functions $\{\zeta_\nu \}_{\nu \in \N}$ is an orthonormal basis of $L^2_e (\frC_n; w)$ if and only if
 a system  $\{\tilde\zeta_\nu\}_{\nu \in \N}$ with $\tilde \zeta_\nu (\th, t)= (1-t^2)^{1/2}\zeta_\nu (\th, t)$ is an orthonormal basis of $L^2_e (\frC_n; \tilde w)$.
\end{lemma}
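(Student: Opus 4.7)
Parts (ii) and (iii) are abstract consequences of (i), so the real work is concentrated in proving (i). The plan is to reuse the norm computations already carried out in the proof of Lemma \ref{lo}, and then read off the factorization (\ref{kpp}) from Lemma \ref{lem2.2}.

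For the isometry of $\alpha$, I would fix $\vp \in L^2(B_n; W)$ and form $f(x)=x_{n+1}\vp(x')$; since $x_{n+1}=\sqrt{1-|x'|^2}$ on $S^n_+$, this is precisely the inverse of the map $f\mapsto \vp$ used in Lemma \ref{lem2.1} and in (\ref{vt2}). The chain of equalities (\ref{vt4a1}) then yields $\|\alpha \vp\|_{L^2(S^n_+;\tilde W)}=\|\vp\|_{L^2(B_n;W)}$ directly: the factor $x_{n+1}^2$ from $|f|^2$ combines with the weight $x_{n+1}^{n-2\lam-1}$ and the Jacobian $(1-|x'|^2)^{-1/2}$ from Lemma \ref{lem2.1} to reproduce $W(x')=(1-|x'|^2)^{n/2-\lam}$. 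Surjectivity is automatic: for arbitrary $f\in L^2(S^n_+;\tilde W)$ the function $\vp(x')=f(x',\sqrt{1-|x'|^2})/\sqrt{1-|x'|^2}$ is sent back to $f$ by $\alpha$, and the same identity read in reverse places $\vp$ in $L^2(B_n;W)$.

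For the isometry of $\beta$ the computation is one line: $\|(1-t^2)^{1/2}\Phi\|^2_{L^2_e(\frC_n;\tilde w)}=\intl_{\frC_n} |\Phi(\th,t)|^2 (1-t^2)^{1/2-\lam}\, d\th\, dt = \|\Phi\|^2_{L^2_e(\frC_n;w)}$. The evenness condition $\Phi(-\th,-t)=\Phi(\th,t)$ is preserved because the multiplier $(1-t^2)^{1/2}$ is even in $t$; surjectivity follows by division by $(1-t^2)^{1/2}$, which is strictly positive on $(-1,1)$. The factorization (\ref{kpp}) is then immediate from Lemma \ref{lem2.2}, which states $(V_+f)(\th,t)=(1-t^2)^{1/2}(R\alpha^{-1}f)(\th,t)=(\beta R\alpha^{-1}f)(\th,t)$.

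Once (i) is in hand, parts (ii) and (iii) follow from the general principle that an isometric isomorphism between Hilbert spaces carries orthonormal bases bijectively onto orthonormal bases. Since $\tilde\eta_\nu=\alpha \eta_\nu$ and $\tilde\zeta_\nu=\beta \zeta_\nu$ by construction, both correspondences drop out. I do not anticipate any serious obstacle; the only point requiring vigilance is the verification that $\beta$ stabilizes the even subspace $L^2_e$, which is immediate from the evenness of the multiplier.
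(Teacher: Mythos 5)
Your proposal is correct and follows essentially the same route as the paper, which proves this lemma simply by pointing to the equalities (\ref{vt4a1}) and (\ref{vt4a2}) established in the proof of Lemma \ref{lo}, together with the factorization from Lemma \ref{lem2.2} and the standard fact that isometric isomorphisms carry orthonormal bases to orthonormal bases. One minor point: your one-line computation for $\b$ uses $w(t)=(1-t^2)^{1/2-\lam}$, which is consistent with (\ref{vt4a2}) and with the cited boundedness result for $R$, even though the displayed definition of $w$ in the paper carries an apparent sign typo in the exponent.
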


We remind   explicit formulas for  $\eta_\nu$ and $\zeta_\nu$ following \cite[Subsection 4.12.3]{Ru15b}.
 Let $\{Y_{m, \mu}(\theta)\}$  be a real-valued orthonormal basis of spherical harmonics in
$L^2 (S^{n-1})$.  Here
 $m=0,1,2, \ldots$ and $ \mu=1,2, \ldots d_n (m)$, where
\be\label{kWSQRT} d_n(m) =(n+2m-2)\,
\frac{(n+m-3)!}{m! \, (n-2)!}\ee
is the dimension of the subspace of spherical harmonics of degree $m$.
 We denote
\be p_{2k}(x')=P_k^{(\lam-n/2,m+n/2 -1)}(2|x'|^2 -1),\qquad x'\in B_n,\ee
where $P_k^{(\lam-n/2,m+n/2 -1)}(t)$ is the Jacobi polynomial \cite{Er}, and introduce the index set
\be \label{hva4cu1}\N=\{\nu=(m,\mu,k):\, m,k=0,1,2, \ldots;   \; \mu=1,2, \ldots d_n (m)\}.\ee
The notation $C_m^\lam (t)$ is commonly used for Gegenbauer polynomials \cite{Er}.

\begin{lemma}\label{nvnvnv} \cite[pp. 237, 239]{Ru15b}  Let $\nu=(m,\mu,k)\in \N$.
The  functions
\bea \label{hva4cu2} \eta_\nu (x')&=&c_\nu \,|x'|^m  \,W^{-1}(x')\, p_{2k}(x')\,Y_{m, \mu} (x'/|x'|),\\
\label{hva4cu3} \zeta_\nu (\theta, t)\!&=&\!d_\nu  \,w^{-1}(t) \,C_{m+2k}^{\lam}\,(t)\,Y_{m, \mu} (\theta),\eea
with
\bea\label {56v8a}
c_\nu &=&\Bigg (\frac{2\,k!\, (2k+\lam +m)\, \Gam(k+m+\lam)}{ \Gam(k+\lam -n/2 +1)\,\Gam(k+m+n/2)} \Bigg )^{1/2},\\
\label {a56v8a} d_\nu &=&2^{\lam-1/2} \Gam (\lam) \left (\frac{(m+2k) ! \,(m+2k +\lam)}{\pi\, \Gam (m+2k+2\lam)}\right )^{1/2},\eea
 form  orthonormal bases of
$L^2 (B_n;W)$ and $L^2_e (\frC_n; w)$, respectively.
\end{lemma}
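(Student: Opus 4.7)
The plan is to separate variables in each of the two Hilbert spaces and reduce every orthonormality statement to the classical one-variable orthogonality of Jacobi or Gegenbauer polynomials paired with the orthogonality of spherical harmonics on $S^{n-1}$.

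For the system $\{\eta_\nu\}$, I would introduce polar coordinates $x'=r\theta$ on $B_n$, so that $dx'=r^{n-1}dr\,d\sig(\theta)$ and the product $\eta_\nu(x')\overline{\eta_{\nu'}(x')}\,W(x')$ factors as
\[
c_\nu c_{\nu'}\,r^{m+m'}(1-r^2)^{\lam-n/2}\,p_{2k}(r)\,p_{2k'}(r)\,Y_{m,\mu}(\theta)Y_{m',\mu'}(\theta).
\]
The angular integration collapses to $\delta_{mm'}\delta_{\mu\mu'}$; the change of variables $u=r^2$ followed by $s=2u-1$ then turns the radial integral into a constant multiple of
\[
\intl_{-1}^{1}(1-s)^{\lam-n/2}(1+s)^{m+n/2-1}\,P_k^{(\lam-n/2,\,m+n/2-1)}(s)\,P_{k'}^{(\lam-n/2,\,m+n/2-1)}(s)\,ds.
\]
The hypothesis $\lam>n/2-1$ makes both exponents strictly greater than $-1$, so the classical Jacobi orthogonality formula applies: the integral vanishes for $k\neq k'$, and for $k=k'$ its explicit value, matched against $1$, determines the normalization constant $c_\nu$ in (\ref{56v8a}). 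Completeness follows because $\{P_k^{(\lam-n/2,\,m+n/2-1)}\}_{k\ge 0}$ is a complete orthogonal system in the associated weighted $L^2((-1,1))$ and $\{Y_{m,\mu}\}$ is an orthonormal basis of $L^2(S^{n-1})$; the polar-coordinate identification of $L^2(B_n;W)$ as a Hilbert tensor product glues these factors together.

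For $\{\zeta_\nu\}$ the identical separation on $\frC_n=S^{n-1}\times(-1,1)$ reduces orthonormality to spherical-harmonic orthogonality combined with the Gegenbauer orthogonality integral
\[
\intl_{-1}^{1}(1-t^2)^{\lam-1/2}\,C_{m+2k}^{\lam}(t)\,C_{m'+2k'}^{\lam}(t)\,dt,
\]
whose explicit Szeg\H{o} value pins down $d_\nu$ as in (\ref{a56v8a}). A parity check confirms $\zeta_\nu\in L^2_e(\frC_n;w)$: since $Y_{m,\mu}(-\theta)=(-1)^m Y_{m,\mu}(\theta)$ and $C_{m+2k}^{\lam}(-t)=(-1)^{m+2k}C_{m+2k}^{\lam}(t)=(-1)^m C_{m+2k}^{\lam}(t)$, the two sign factors cancel.

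The one genuinely delicate step, and the one I expect to be the main obstacle, is the completeness of $\{\zeta_\nu\}$ in the \emph{even} subspace $L^2_e(\frC_n;w)$ rather than in the whole space $L^2(\frC_n;w)$. Expanding an arbitrary $\Phi\in L^2_e$ in spherical harmonics with respect to $\theta$ produces coefficient functions $\Phi_{m,\mu}(t)$ whose parity in $t$ is forced to equal $(-1)^m$ by the condition $\Phi(-\theta,-t)=\Phi(\theta,t)$. For each fixed $m$ one then needs that the subsystem $\{C_N^{\lam}\}_{N\equiv m\,(\mathrm{mod}\,2)}$ is a complete orthogonal family in the corresponding parity subspace of $L^2((-1,1),(1-t^2)^{\lam-1/2}dt)$; reindexing $N=m+2k$ matches this exactly with $\{C_{m+2k}^{\lam}\}_{k\ge 0}$, giving the required basis. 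Once the weighted spaces have been unpacked this way, the remaining orthogonality relations and normalizations reduce to bookkeeping against Szeg\H{o}'s Jacobi and Gegenbauer integrals.
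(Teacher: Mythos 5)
Your proposal is correct, and it is essentially the standard separation-of-variables proof; note, though, that the paper itself offers no proof at all for this lemma --- it is imported verbatim from \cite[pp.\ 237, 239]{Ru15b} --- so what you have done is supply the argument that the cited reference carries out. Your key steps check out: in polar coordinates the substitution $u=r^2$, $s=2u-1$ turns the radial integral into the Jacobi orthogonality integral with parameters $(\lam-n/2,\,m+n/2-1)$, the hypothesis $\lam>n/2-1$ is exactly what makes the first parameter exceed $-1$, and the resulting norm $\frac{1}{2(2k+\lam+m)}\cdot\frac{\Gam(k+\lam-n/2+1)\Gam(k+m+n/2)}{k!\,\Gam(k+m+\lam)}$ reproduces $c_\nu$ of (\ref{56v8a}); likewise Szeg\H{o}'s Gegenbauer norm reproduces $d_\nu$ of (\ref{a56v8a}). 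You are also right that the only delicate point is completeness in the \emph{even} subspace, and your parity bookkeeping ($\Phi_{m,\mu}$ forced to have parity $(-1)^m$ in $t$, matched by $C^\lam_N$ with $N\equiv m \pmod 2$, reindexed as $N=m+2k$) is the correct way to handle it. One remark worth recording: your Gegenbauer integral carries the weight $(1-t^2)^{\lam-1/2}$, i.e.\ you have implicitly taken $w(t)=(1-t^2)^{1/2-\lam}$, which is the weight actually used in the norm computation of Lemma \ref{lo} and the one consistent with the constant $d_\nu$ and with the isometry $\b$; the displayed definition $w(t)=(1-t^2)^{-1/2-\lam}$ in the paper is a sign typo (it coincides there with $\tilde w$), and your computation silently corrects it.
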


\begin{corollary} The functions \be\label {KLO}
\tilde \eta_\nu (x)= x_{n+1}\eta_\nu (x'), \qquad \tilde \zeta_\nu (\th, t)= (1-t^2)^{1/2}\zeta_\nu (\th, t),\ee
 where $\eta_\nu$ and $\zeta_\nu $ are defined  by (\ref{hva4cu2}) and (\ref{hva4cu3}),  form  orthonormal bases of $L^2 (S^n_+;\tilde W)$   and $L^2_e (\frC_n; \tilde w)$, respectively.
\end{corollary}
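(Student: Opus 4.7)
The plan is to observe that the corollary follows immediately by composing Lemma \ref{nvnvnv} with parts (ii) and (iii) of Lemma \ref{lo1}. First I would invoke Lemma \ref{nvnvnv} to get that $\{\eta_\nu\}_{\nu\in\N}$ is an orthonormal basis of $L^2(B_n;W)$ and $\{\zeta_\nu\}_{\nu\in\N}$ is an orthonormal basis of $L^2_e(\frC_n;w)$. Then I would apply Lemma \ref{lo1}(ii) to the basis $\{\eta_\nu\}$, noting that $\tilde\eta_\nu(x)=x_{n+1}\eta_\nu(x')$ is exactly the function obtained by applying the isometric isomorphism $\a$ of Lemma \ref{lo1}(i) to $\eta_\nu$. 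Similarly, Lemma \ref{lo1}(iii) applied to $\{\zeta_\nu\}$, with $\tilde\zeta_\nu(\th,t)=(1-t^2)^{1/2}\zeta_\nu(\th,t)=(\b\zeta_\nu)(\th,t)$, gives the second basis.

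The conceptual point is the general fact that an isometric isomorphism between Hilbert spaces maps orthonormal bases to orthonormal bases. Since $\a:L^2(B_n;W)\to L^2(S^n_+;\tilde W)$ and $\b:L^2_e(\frC_n;w)\to L^2_e(\frC_n;\tilde w)$ are isometric isomorphisms by Lemma \ref{lo1}(i), the conclusion follows. If desired, one could verify isometry directly by repeating the computation (\ref{vt4a1})--(\ref{vt4a2}): for instance, for $\a$,
\[
\|\a\vp\|^2_{L^2(S^n_+;\tilde W)}=\intl_{S^n_+}x_{n+1}^2|\vp(x')|^2\,x_{n+1}^{n-2\lam-1}\,d\sig(x)=\intl_{B_n}|\vp(x')|^2(1-|x'|^2)^{n/2-\lam}\,dx'=\|\vp\|^2_{L^2(B_n;W)},
\]
using the parametrization of Lemma \ref{lem2.1}; surjectivity is clear since $x_{n+1}>0$ on the interior of $S^n_+$. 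The analogous computation for $\b$ is even more transparent.

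There is no real obstacle here; the corollary is essentially a transport of structure. The only mild subtlety is to make sure the evenness condition defining $L^2_e(\frC_n;\tilde w)$ is preserved by multiplication by $(1-t^2)^{1/2}$, but this is immediate because $1-(-t)^2=1-t^2$, so $(\b\Phi)(-\th,-t)=(1-t^2)^{1/2}\Phi(-\th,-t)=(1-t^2)^{1/2}\Phi(\th,t)=(\b\Phi)(\th,t)$ whenever $\Phi\in L^2_e(\frC_n;w)$. Thus $\b$ indeed carries $L^2_e(\frC_n;w)$ onto $L^2_e(\frC_n;\tilde w)$, and the corollary is proved.
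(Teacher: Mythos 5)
Your proposal is correct and matches the paper's (implicit) argument exactly: the corollary is stated without proof precisely because it is the immediate composition of Lemma \ref{nvnvnv} with parts (ii) and (iii) of Lemma \ref{lo1}, i.e.\ transport of the orthonormal bases $\{\eta_\nu\}$ and $\{\zeta_\nu\}$ under the isometric isomorphisms $\a$ and $\b$. Your supplementary verifications of the isometry and of the preservation of evenness are accurate but not needed beyond what the cited lemmas already provide.
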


\begin{corollary} \label{xv}  The number
\be\label{98frdv}    s_\nu\!=\!2^{\lam}\,\pi^{(n-1)/2}\,
\left [\frac{(m\!+\!2k)!\,\Gam (k\!+\!m\!+\!\lam)\,\Gam (k\!+\!1\!+\!\lam\!-\!n/2) }{k!\,\Gam (m\!+\!2k\!+\!2\lam)\,\Gam (k\!+\!m\!+\!n/2)}\right]^{1/2} \ee
is  the  singular value of the vertical slice transform $V_+$ with respect to the
orthonormal systems   $\{\tilde\eta_\nu\}$  and $\{\tilde\zeta_\nu\}$, that is,
 $V_+\tilde\eta_\nu=s_\nu  \tilde\zeta_\nu$.
\end{corollary}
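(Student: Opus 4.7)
The plan is to reduce the statement to the singular value decomposition of the Radon transform on the unit ball, which is already recorded in \cite[Subsection 4.12.3]{Ru15b}. The bridge is the factorization (\ref{kpp}):
\[
V_+ = \b \, R \, \a^{-1},
\]
together with the isomorphisms $\a$ and $\b$ described in Lemma \ref{lo1}.

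First I would apply $\a^{-1}$ to $\tilde\eta_\nu$. By the very definition in (\ref{KLO}),
\[
\a^{-1}\tilde\eta_\nu = \eta_\nu,
\]
so that $V_+\tilde\eta_\nu = \b\, R\eta_\nu$. The key input is then the Radon SVD on the weighted ball: with the orthonormal bases $\{\eta_\nu\}$ of $L^2(B_n;W)$ and $\{\zeta_\nu\}$ of $L^2_e(\frC_n;w)$ given by (\ref{hva4cu2})--(\ref{hva4cu3}), one has
\[
R\eta_\nu = s_\nu\, \zeta_\nu,
\]
with $s_\nu$ as in (\ref{98frdv}). This is exactly the content of the SVD that combines the Jacobi polynomial basis on $B_n$ with the Gegenbauer basis on the cylinder, as established in \cite[Subsection 4.12.3]{Ru15b}. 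The constants $c_\nu$ and $d_\nu$ in (\ref{56v8a})--(\ref{a56v8a}) are precisely the normalizing factors chosen so that $R\eta_\nu = s_\nu \zeta_\nu$ with the expression (\ref{98frdv}); this can be verified by computing $R\eta_\nu$ using the well-known formula for the Radon transform of $|x'|^m W^{-1}(x') p_{2k}(x') Y_{m,\mu}(x'/|x'|)$ in terms of Gegenbauer polynomials and the appropriate beta-integral constants.

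Finally I would apply $\b$. By (\ref{KLO}),
\[
\b(s_\nu \zeta_\nu)(\th,t) = (1-t^2)^{1/2} s_\nu \zeta_\nu(\th,t) = s_\nu\, \tilde\zeta_\nu(\th,t).
\]
Chaining the three steps gives $V_+\tilde\eta_\nu = s_\nu\, \tilde\zeta_\nu$, as claimed. Since $\a$ and $\b$ are isometric isomorphisms by Lemma \ref{lo1}(i), the orthonormality of the transplanted bases in Corollary after that lemma guarantees that the $s_\nu$ are genuine singular values, and the isometric character of $\a,\b$ also shows $\|V_+\|=\|R\|$, consistent with Lemma \ref{lo}.

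The main obstacle is really bookkeeping: matching the constants $c_\nu,d_\nu$ in (\ref{56v8a})--(\ref{a56v8a}) with the known Radon SVD so that the singular value takes the clean form (\ref{98frdv}). Everything else is a direct transport through the two isometries $\a$ and $\b$.
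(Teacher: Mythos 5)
Your proposal is correct and follows essentially the same route as the paper: the paper's proof likewise invokes $R\eta_\nu = s_\nu\zeta_\nu$ (Lemma 4.124 of \cite{Ru15b}) and then transports it through the factorization $V_+=\b R\a^{-1}$ of (\ref{kpp}) together with (\ref{KLO}). Nothing further is needed.
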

\begin{proof} By Lemma 4.124 from  \cite{Ru15b}, $R\eta_\nu =s_\nu \,\z_\nu$. Hence, by (\ref {kpp}) and (\ref {KLO}), $\b R \a^{-1} \tilde \eta_\nu = s_\nu \b \z_\nu=s_\nu\tilde \z_\nu$, that is,  $V_+\tilde \eta_\nu = s_\nu \tilde \z_\nu$.
\end{proof}

  The singular value decompositions of $V_+$  and its inverse are  consequences of Theorem 4.125
  from \cite{Ru15b}, Lemma \ref{lo1}, and Corollary \ref{xv}.  We set $ F(\theta, t)= V_+f(\theta, t)$ and use the following notation for the corresponding Fourier coefficients:
 \[
 f_\nu =\intl_{S^n_+} f(x) \,\tilde \eta_\nu (x) \,\tilde W(x)\, d\sig(x), \quad F_\nu= \intl_{\frC_n}  F(\th, t)\, \tilde \zeta_\nu (\th, t)\, 
 \tilde  w(t)\, d\th dt.\]

\begin{theorem}  \label{kkqqsw} Let $f\in L^2 (S^n_+; \tilde W)$. ${}$

\noindent {\rm (i)} The singular value decomposition of the vertical slice transform $V_+f$
 has the form
\be\label{98frdv1}
(V_+f)(\theta, t)=\sum\limits_{\nu} s_\nu \,f_\nu\,\tilde \zeta_\nu(\theta, t),\ee
where $s_\nu$ has the form  (\ref{98frdv}) and  $\tilde \zeta_\nu$ is defined by (\ref{KLO}) and (\ref{hva4cu3}).

\noindent {\rm (iii)}   The function $f$ can be reconstructed from $F=V_+f$ by the formula
\be\label{98frdv2}
f(x)=\sum\limits_{\nu} s_\nu^{-1} \,F_\nu\,\tilde\eta_\nu(x),
\ee
where $\tilde \eta_\nu$ is defined by  (\ref{KLO}) and (\ref{hva4cu2}).

\noindent The series   (\ref{98frdv1}) and (\ref{98frdv2}) converge in the $L^2_e (\frC_n; \tilde w)$-norm and in the  $L^2(S^n_+; \tilde W)$-norm, respectively.
\end{theorem}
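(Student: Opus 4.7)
The plan is to exploit the intertwining identity $V_+=\b R\a^{-1}$ from Lemma \ref{lo1}(i) together with the action $V_+\tilde\eta_\nu=s_\nu\tilde\zeta_\nu$ from Corollary \ref{xv}. Since $\{\tilde\eta_\nu\}$ is an orthonormal basis of $L^2(S^n_+;\tilde W)$ by the Corollary after Lemma \ref{nvnvnv}, any $f\in L^2(S^n_+;\tilde W)$ admits the expansion
\[
f=\suml_\nu f_\nu\,\tilde\eta_\nu,\qquad f_\nu=\intl_{S^n_+}f(x)\,\tilde\eta_\nu(x)\,\tilde W(x)\,d\sig(x),
\]
with convergence in $L^2(S^n_+;\tilde W)$.

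First I would prove (i). By Lemma \ref{lo} the operator $V_+\colon L^2(S^n_+;\tilde W)\to L^2_e(\frC_n;\tilde w)$ is bounded, so I may apply it term by term to the expansion of $f$:
\[
V_+f=\suml_\nu f_\nu\,V_+\tilde\eta_\nu=\suml_\nu s_\nu f_\nu\,\tilde\zeta_\nu,
\]
where the last equality uses Corollary \ref{xv}. This is exactly (\ref{98frdv1}), with convergence in $L^2_e(\frC_n;\tilde w)$.

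Next I would prove (iii). Taking the inner product of (\ref{98frdv1}) with $\tilde\zeta_\mu$ in $L^2_e(\frC_n;\tilde w)$ and using the orthonormality of $\{\tilde\zeta_\nu\}$ (Corollary after Lemma \ref{nvnvnv}), I get $F_\mu=s_\mu f_\mu$, i.e.\ $f_\nu=s_\nu^{-1}F_\nu$. Inserting this into the orthonormal expansion of $f$ yields
\[
f=\suml_\nu s_\nu^{-1}F_\nu\,\tilde\eta_\nu,
\]
which is (\ref{98frdv2}). Convergence in $L^2(S^n_+;\tilde W)$ follows from Parseval once one knows the series $\suml_\nu s_\nu^{-2}|F_\nu|^2$ is finite; but this sum equals $\suml_\nu |f_\nu|^2=\|f\|^2_{L^2(S^n_+;\tilde W)}$, which is finite by hypothesis.

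The main subtlety is justifying that the singular values $s_\nu$ are indeed nonzero (so that $s_\nu^{-1}$ makes sense) and that $V_+$ is injective on $L^2(S^n_+;\tilde W)$; both facts are inherited from the analogous properties of the Radon transform on $L^2(B_n;W)$ via the isometry $V_+=\b R\a^{-1}$. Inspection of (\ref{98frdv}) shows $s_\nu>0$ for every $\nu\in\N$ under the standing hypothesis $\lam>n/2-1$, which ensures all the Gamma-factors are finite and positive. This is the only place where the assumption on $\lam$ is essential; everything else is a routine transcription of Theorem 4.125 of \cite{Ru15b} through the unitary identification supplied by Lemma \ref{lo1}.
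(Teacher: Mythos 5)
Your proposal is correct and follows essentially the same route as the paper: the paper simply cites Theorem 4.125 of \cite{Ru15b} together with Lemma \ref{lo1} and Corollary \ref{xv}, while you spell out the standard argument those references encode (expand $f$ in the orthonormal basis $\{\tilde\eta_\nu\}$, apply the bounded operator $V_+$ termwise using $V_+\tilde\eta_\nu=s_\nu\tilde\zeta_\nu$, and invert coefficientwise using $s_\nu>0$, which indeed holds for $\lam>n/2-1$). No gaps.
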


\section{Method of Analytic Continuation}
 Given  $x\in S^n$ and $t \in (-1,1)$, let
\be \label {77b}
(Mf)(x, t)=\frac{(1-t^2)^{(1-n)/2}}{\sig_{n-1}}\intl_{x\cdot y=t} f (y)\, d\sig (y) \ee
be the mean value of  $f$  over the planar section  $\{y\in S^n: x\cdot y=t\}$. This operator is  commonly used  in  analysis on the sphere; see, e.g., \cite{Ru15b}  and references therein. If $x$ lies on the equator $S^{n-1}$, then  (\ref{77b}) is exactly our vertical slice transform, so that
\be \label {b77b}
(Vf)(\th, t)= \sig_{n-1}\, (Mf)(\th, t)\, (1-t^2)^{(n-1)/2}.\ee
Thus reconstruction of $f$ from $(Vf)(\th, t)$ is equivalent to reconstruction of $f$ from the spherical mean $Mf$ over geodesic spheres centered on the equator. It means that we can invoke the  method which was suggested in our previous paper \cite{AER}, of course, with suitable modifications.

Unlike the methods of the previous section, we can apply the method of analytic continuation only to infinitely differentiable  functions  which are even in the $x_{n+1}$ variable and vanish identically in some neighborhood of the equator $x_{n+1}=0$. The space of all  such functions will be denoted by
$\tilde C_e^\infty (S^n)$.
The subspace of integrable functions on $S^n$ which are even in the $x_{n+1}$-variable will be denoted by $L^1_e (S^n)$. The abbreviation $a.c.$ means analytic continuation.

It is convenient to treat the cases $n>2$ and $n=2$   separately.

\subsection{The case $n>2$} \label {98m}  We introduce an  analytic family of operators
\be
(N^\a f)(\th, t)\!=\!\intl_{S^n}  \!\frac{|\th\cdot y \!-\!t|^{\a -1}}{\Gam (\a/2)} f(y)\,  d\sig (y),\quad (\th, t) \!\in  \!\frC_n, \; Re \, \a \!> \!0,\ee
and a  backprojection operator $P$  that sends  functions on $\frC_n$ to  functions on $S^n$ by the formula
 \be\label{pfns}
(PF)(x)= \frac{1}{\sig_{n-1}}\,\intl_{S^{n-1}} F(\th, \th \cdot x)\,
d\sig (\th), \qquad x \in S^n.\ee
For $x$ and $y$ in $S^n$ we keep the previous notation $x=(x',x_{n+1})$, $y=(y',y_{n+1})$, where $x'$ and $y'$ are points in $B_n$.

\begin{lemma}\label{aaus}  If  $f\in L^1_e (S^n)$,
\be\label {mzs}
\vp(y')=(1-|y'|^2)^{-1/2}\,f(y', (1-|y'|^2)^{1/2}),
\ee
then
\be\label {anals} \underset
{\a=3-n}{a.c.} (P N^\a f)(x)=\frac{2\Gam (n/2)}{\pi^{1/2}}\intl_{B_n}  \frac {\vp(y')\, dy'}{|x'-y'|^{n-2}}.\ee
\end{lemma}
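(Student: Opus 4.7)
My plan is to unwind the definitions of $P$ and $N^\a$, apply Fubini (legitimate for $Re\,\a$ large when $f\in L^1_e(S^n)$), and evaluate the resulting inner integral on $S^{n-1}$ in closed form. Identifying $\th\in S^{n-1}$ with $(\th,0)\in S^n\subset \bbr^{n+1}$ so that $\th\cdot y=\th\cdot y'$ and $\th\cdot x=\th\cdot x'$, this gives
\[
(PN^\a f)(x)=\frac{1}{\sig_{n-1}\,\Gam(\a/2)}\intl_{S^n} f(y)\, K_\a(y'-x')\, d\sig(y),\qquad K_\a(z)=\intl_{S^{n-1}}|\th\cdot z|^{\a-1}\, d\sig(\th).
\]
The Semyanistyi-type identity
\[
K_\a(z)=\sig_{n-2}\,\frac{\Gam(\a/2)\,\Gam((n-1)/2)}{\Gam((\a+n-1)/2)}\,|z|^{\a-1},
\]
obtained by spherical coordinates on $S^{n-1}$ adapted to $z/|z|$ together with a Beta integral, is the first key ingredient; its $\Gam(\a/2)$ factor cancels exactly the one built into $N^\a$.

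Next I will exploit the evenness of $f$ in $x_{n+1}$ to replace the integral over $S^n$ by twice the integral over $S^n_+$, and invoke Lemma \ref{lem2.1} to rewrite the latter as an integral over $B_n$ against $\vp$. The result is
\[
(PN^\a f)(x)=\frac{2\,\sig_{n-2}\,\Gam((n-1)/2)}{\sig_{n-1}\,\Gam((\a+n-1)/2)}\intl_{B_n}|y'-x'|^{\a-1}\,\vp(y')\, dy'.
\]
The right-hand side is holomorphic in $\a$ on the half-plane $Re\,\a>1-n$, since there $|y'-x'|^{\a-1}$ is locally integrable on $\bbr^n$ and $\vp\in L^1(B_n)$. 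Because $3-n$ lies in this half-plane whenever $n>2$, the required analytic continuation is realised simply by substitution.

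At $\a=3-n$ one has $\Gam((\a+n-1)/2)=\Gam(1)=1$; inserting $\sig_{n-1}=2\pi^{n/2}/\Gam(n/2)$ and $\sig_{n-2}=2\pi^{(n-1)/2}/\Gam((n-1)/2)$ collapses the prefactor to $2\Gam(n/2)/\pi^{1/2}$, which yields (\ref{anals}). The main delicate point is organising the analytic continuation cleanly: one must first verify the chain of equalities pointwise for $Re\,\a$ large, after which the closed-form right-hand side supplies the continuation down to $Re\,\a>1-n$. Crucially, the $\Gam(\a/2)$ in the denominator of $N^\a$ is precisely what absorbs the potential singularity of the Semyanistyi kernel near $\a=0$, making the whole procedure genuinely holomorphic rather than merely meromorphic.
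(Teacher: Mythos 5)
Your proposal is correct and follows essentially the same route as the paper: interchange the order of integration, evaluate the inner integral $\int_{S^{n-1}}|\th\cdot z|^{\a-1}\,d\sig(\th)$ in closed form so that the $\Gam(\a/2)$ cancels, reduce to an integral over $B_n$ via Lemma \ref{lem2.1} and the evenness of $f$, and then analytically continue the resulting expression (holomorphic for $Re\,\a>1-n$) to $\a=3-n$. The constants match, so nothing further is needed.
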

\begin{proof} For $Re \, \a >0$, changing the order of
integration, we have
\[
(PN^\a f)(x)=\intl_{S^n} f(y)\, k_\a (x,y) d\sig (y), \]
where
\bea k_\a (x,y)\!&=&\!\frac{1}{\sig_{n-1}}\intl_{S^{n-1}}\!
\frac{|\th\cdot (x\!-\! y)|^{\a -1}}{\Gam (\a/2)}
 d\sig (\th)\!=\!\frac{1}{\sig_{n-1}}\intl_{S^{n-1}}\!
\frac{|\th\cdot (x'\!-\! y')|^{\a -1}}{\Gam (\a/2)}
 d\sig (\th)\nonumber\\
 &=&\frac{|x'- y'|^{\a -1}}{\sig_{n-1}}\, \intl_{S^{n-1}}
\frac{|\th\cdot \om|^{\a -1}}{\Gam (\a/2)}
 d\sig (\th), \quad \om =\frac{x'- y'}{|x'- y'|}\in S^{n-1}.\nonumber\eea
The last integral can be evaluated by the formula
\[ \label{hva37e} \intl_{S^{n-1}}|\theta \cdot \eta|^{\a -1}\,
d\th=\frac{2\pi^{(n-1)/2}\Gam (\a/2)}{\Gam ((n+\a-1)/2)};\]
see, e.g., \cite [formula (1.12.14)]{Ru15b}. This gives
\be\label {vfr}
k_\a (x,y)=c_{n,\a}\,  |x'- y'|^{\a -1},\quad c_{n,\a}=\frac{\Gam (n/2)\, \pi^{-1/2}}{\Gam ((n+\a-1)/2)},  \ee
and therefore, by Lemma \ref{lem2.1},
\[(PN^\a f)(x)\!=\!c_{n,\a} \intl_{S^n}\! f(y)  |x'\!- \!y'|^{\a -1} \, d\sig (y)\!=\!2c_{n,\a} \intl_{B_n}\!  \vp(y')|x'\!- \!y'|^{\a -1} \, dy',\]
where $\vp \in L^1(B_n)$ has the form (\ref{mzs}). The last integral is an analytic function of $\a$ in the domain $Re \, \a> 1-n$ because one  can differentiate in $\a$ under the sign of integration. Taking analytic continuation at $\a=3-n$, we complete the proof.
\end{proof}

We will also need an alternative  representation of  $\underset
{\a=3-n}{a.c.} (PN^\a
f)(x)$ in terms of the spherical means (\ref{77b}).

\begin{lemma}\label{aauss}    Let $f\in \tilde C_e^\infty (S^n)$,
\be \label {777}\del_{n}=
\frac{(-1)^{[n/2-1]}\, \Gam((n-1)/2)}{(n\!-\!3)!}.\ee

\noindent {\rm (i)} If  $n=3,5, \ldots\, $, then
\be\label {anxs} \underset
{\a=3-n}{a.c.} (PN^\a
f)(x)=\del_{n}\,  \intl_{S^{n-1}} (d/dt)^{n-3} [(Mf)(\th, t)\, (1-t^2)^{n/2 -1}]\Big |_{t=\th\cdot x}\, d\th,\nonumber\ee

\noindent {\rm (ii)} If $n=4,6,\ldots\,$, then

\bea\label {anx2s} &&\underset
{\a=3-n}{a.c.} (PN^\a
f)(x)=-\frac{\del_{n}}{\pi} \,   \intl_{S^{n-1}}
d\sig (\th)\nonumber\\
&&\times\intl_{-1}^1  (d/dt)^{n-2} [(Mf)(\th, t)\, (1-t^2)^{n/2 -1}]\,\log |t-\th\cdot x|\,dt.\nonumber\eea
\end{lemma}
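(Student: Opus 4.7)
The plan is to reduce the statement to the classical one-dimensional analytic continuation of the Riesz kernel $|t|^{\alpha-1}/\Gamma(\alpha/2)$.

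First I would slice $S^n$ by the hyperplanes $\{y:\theta\cdot y=t\}$. Parametrizing $y=t\theta+\sqrt{1-t^2}\,\omega$ with $\omega$ in the unit sphere of $\theta^\perp\subset\bbr^{n+1}$ gives $d\sigma(y)=(1-t^2)^{(n-2)/2}\,dt\,d\sigma(\omega)$, while the inner integral over $\omega$ equals $\sigma_{n-1}(Mf)(\theta,t)$ by (\ref{77b}). Substituting this into the definitions of $N^\alpha$ and $P$ yields
\[
(PN^\alpha f)(x)=\int_{S^{n-1}}d\sigma(\theta)\int_{-1}^1 \frac{|t-\theta\cdot x|^{\alpha-1}}{\Gamma(\alpha/2)}\,g_\theta(t)\,dt, \qquad g_\theta(t):=(1-t^2)^{(n-2)/2}(Mf)(\theta,t).
\]
Because $\pm\theta$ lies on the equator for every $\theta\in S^{n-1}$ and, for $t$ near $\pm 1$, the entire planar section lies near $\pm\theta$, the hypothesis $f\in\tilde C_e^\infty(S^n)$ forces $(Mf)(\theta,t)=0$ there; thus $g_\theta\in C_c^\infty((-1,1))$, so all subsequent integrations by parts will be free of boundary terms.

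Next I would apply the Gelfand--Shilov identity
\[
\underset{\alpha=-2k}{a.c.}\frac{|t|^{\alpha-1}}{\Gamma(\alpha/2)}=\frac{(-1)^{k}k!}{(2k)!}\,\delta^{(2k)}(t),\qquad k=0,1,2,\ldots,
\]
which arises by cancelling the simple poles of numerator and denominator at $\alpha=-2k$. For \emph{odd} $n$ the value $\alpha=3-n$ has this form with $2k=n-3$, and the identity $\Gamma((n-1)/2)=((n-3)/2)!$ makes the coefficient equal to $\delta_n$ from (\ref{777}). Pairing the resulting $\delta^{(n-3)}(t-\theta\cdot x)$ against $g_\theta$ and using $(-1)^{n-3}=1$ produces (i).

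For \emph{even} $n$ the exponent $\alpha=3-n$ is an odd negative integer, so neither factor has a pole and the analytic continuation is just the regular quotient $|t|^{2-n}/\Gamma((3-n)/2)$. I would then use the distributional identity $|t|^{2-n}=-\frac{1}{(n-3)!}(d/dt)^{n-2}\log|t|$ (valid for even $n$), integrate by parts $n-2$ times against $g_\theta$ (noting $(-1)^{n-2}=1$), and match the constant via the reflection formula $\Gamma((3-n)/2)\Gamma((n-1)/2)=\pi/\sin(\pi(3-n)/2)$; since $\sin(\pi(3-n)/2)=-(-1)^{n/2}$ for even $n$, this converts $1/\Gamma((3-n)/2)$ into $(-1)^{n/2-1}\Gamma((n-1)/2)/\pi$, reproducing exactly the prefactor $-\delta_n/\pi$ in (ii). The main obstacle is this constant bookkeeping: the two parities correspond to qualitatively different behaviors of the Riesz family (a removable pole-over-pole for odd $n$, versus a regular point where a logarithm must be extracted via the distributional derivative of $\log|t|$ for even $n$), and the $\Gamma$-reflection must be handled carefully so that the final prefactor collapses to the form of $\delta_n$ displayed in (\ref{777}).
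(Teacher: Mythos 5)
Your proposal is correct and follows essentially the same route as the paper: reduce $(PN^\a f)(x)$ to a one-dimensional integral of $(Mf)(\th,t)(1-t^2)^{n/2-1}$ against the Riesz kernel $|t-\th\cdot x|^{\a-1}/\Gam(\a/2)$ via the slice formula, observe that the profile is $C_c^\infty((-1,1))$ because $f$ vanishes near the equator, and then analytically continue to $\a=3-n$, splitting into the pole-over-pole (odd $n$) and regular-point/logarithmic (even $n$) cases. The only difference is that you carry out explicitly the Gel'fand--Shilov residue computation and the reflection-formula bookkeeping that the paper delegates to the references \cite{GSh1} and \cite[Lemma 2.1]{AER}, and your constants correctly reproduce $\del_n$ in both parities.
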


\begin{proof} For $Re \, \a >0$, by making use of the formula
\be\label{8ghv}
\intl_{S^n} f(y)\, a (\th\cdot y) \, dy=\sig_{n-1}\intl_{-1}^1 a(s) (Mf)(\th, s)\,(1-s^2)^{n/2 -1}\, ds\ee
 (see, e.g., \cite[formula (A.11.18]{Ru15b}),  we have
\bea
&&(N^\a f)(\th, t)=\frac{\sig_{n-1}}{\Gam (\a/2)}\intl_{-1}^1  (Mf)(\th, s)\,|s -t|^{\a -1} (1-s^2)^{n/2 -1}\, ds\nonumber\\
&&= \intl_{-\infty}^\infty \frac{|s|^{\a -1}}{\Gam ( \a/2)} \,  h_\th (s+t)\, dst, \quad  h_\th (t)=\sig_{n-1}\,(Mf)(\th, t) \,(1-t^2)^{n/2 -1}_+.\nonumber\eea
Here $(1-t^2)^{n/2 -1}_+$ denotes extension of $(1-t^2)^{n/2 -1}$  by zero outside $(-1,1)$.
Because $f$ is smooth and its support is separated from the equator,  $h_\th$ belongs to $ C^\infty (\bbr)$  uniformly in $\th$  and vanishes identically in the respective neighborhoods of $t=\pm 1$. Thus, we can invoke the standard procedure of analytic continuation (see, e.g., \cite{GSh1}, \cite[Lemma 2.1]{AER}),  and obtain the following equalities.

\noindent For $n=3,5, \ldots\, $:
$$
 \underset
{\a=3-n} {a.c.}\,(N^\a f)(\th, t)\!=\!\del_{n} \,  h_\th^{(n-3)} (t).$$

\noindent For $n=4,6,\ldots\,$:
$$
 \underset
{\a=3-n}{a.c.}\,(N^\a f)(\th, t)=-\frac{\del_{n}} {\pi}\,\intl_{-1}^1\!\!h_\th^{(n-2)} (s)\,\log |s\!-\!t|\, ds,$$
$\del_n$ being defined by (\ref{777}).

Combining these formulas with the backprojection $P$ and noting that operations
$ a.c.$ and $P$ commute,  we obtain
\[
(PN^\a f)(x)=\frac{\del_{n}}{\sig_{n-1}}\,\intl_{S^{n-1}}  h_\th^{(n-3)} (\th \cdot x)\, d\sig (\th),\]
if $n=3,5, \ldots\, $, and
\[
(PN^\a f)(x)=-\frac{\del_{n}}{\pi\,\sig_{n-1}}\,\intl_{S^{n-1}} d\sig (\th) \intl_{-1}^1  h_\th^{(n-2)} (t)\, \log |t-\th \cdot x|\, dt,\]
if $n=4,6,\ldots\,$. This gives the result.
\end{proof}

Now we compare different expressions of   $\underset
{\a=3-n} {a.c.}PN^\a f$ in  Lemmas   \ref{aaus}  and   \ref{aauss}. The right-hand side of (\ref{anals}) is $c_n\,I^2\vp$, where  $I^2\vp$  is the Newtonian potential (\ref{hi8ve}) and $c_n=4\pi^{(n-1)/2}(n-2)$. This gives the following corollary.

 \begin{corollary}  Let $f\in \tilde C_e^\infty (S^n)$, $n>2$.

\noindent {\rm (i)} If $n=3,5, \ldots\, $, then
\[
(I^2\vp)(x')=  \lam_n \intl_{S^{n-1}} (d/dt)^{n-3} [(Vf)(\th, t)\, (1-t^2)^{-1/2}]\Big |_{t=\th\cdot x'}\, d\th,\]
 \be\label{kio}  \lam_n= \frac {\del_{n}}{c_n\, \sig_{n-1}}=\frac{(-1)^{[n/2-1]}\, \Gam((n-1)/2)\, \Gam(n/2)}{8\pi^{n-1/2}\,(n\!-\!2)!}.\ee

 \noindent {\rm (ii)} If $n=4,6,\ldots\,$, then
\[
(I^2\vp)(x')= -\frac{\lam_n}{\pi} \intl_{S^{n-1}} d\sig (\th) \intl_{-1}^1 (d/dt)^{n-2} [(Vf)(\th, t)\, (1-t^2)^{-1/2}]\, \log |t-\th \cdot x'|\, dt.\]
\end{corollary}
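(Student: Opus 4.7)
The plan is to combine the two expressions for $\underset{\a=3-n}{a.c.} (PN^\a f)(x)$ that have just been established in Lemmas \ref{aaus} and \ref{aauss}. The right-hand side of (\ref{anals}) is, up to a constant, the Newtonian potential $(I^2\vp)(x')$ from (\ref{hi8ve}); the right-hand sides of the formulas in Lemma \ref{aauss} involve the spherical mean $Mf$. Equating the two gives an identity relating $I^2\vp$ to an integral of derivatives of $Mf$, and then (\ref{b77b}) rewrites $Mf$ in terms of $Vf$.

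More precisely, I would first verify the constant $c_n=4\pi^{(n-1)/2}(n-2)$ claimed in the comment preceding the corollary: since $\gam_n(2)=4\pi^{n/2}/\Gam((n-2)/2)$ and $\Gam(n/2)=\tfrac{n-2}{2}\Gam((n-2)/2)$, we have
\[
\frac{2\Gam(n/2)}{\pi^{1/2}}=\frac{1}{\gam_n(2)}\cdot\frac{8\pi^{n/2}\Gam(n/2)}{\pi^{1/2}\Gam((n-2)/2)}=\frac{c_n}{\gam_n(2)},
\]
so that $\underset{\a=3-n}{a.c.}(PN^\a f)(x)=c_n(I^2\vp)(x')$, where $\vp$ is as in (\ref{mzs}).

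Next, using (\ref{b77b}), the combination that appears in Lemma \ref{aauss} simplifies as
\[
(Mf)(\th,t)\,(1-t^2)^{n/2-1}=\frac{1}{\sig_{n-1}}\,(Vf)(\th,t)\,(1-t^2)^{-1/2}.
\]
Substituting this into parts (i) and (ii) of Lemma \ref{aauss}, then dividing by $c_n$, yields the corollary with
\[
\lam_n=\frac{\del_n}{c_n\,\sig_{n-1}}=\frac{(-1)^{[n/2-1]}\Gam((n-1)/2)\Gam(n/2)}{8\pi^{n-1/2}(n-2)!},
\]
after using $\sig_{n-1}=2\pi^{n/2}/\Gam(n/2)$ and $(n-3)!(n-2)=(n-2)!$.

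There is no real obstacle here beyond careful bookkeeping of constants and the parity split between odd and even $n$. The one point that deserves a quick remark is that the analytic continuation $\underset{\a=3-n}{a.c.}$ commutes with the backprojection $P$ (this is used implicitly in Lemma \ref{aauss}) and, separately, with the identification $k_\a(x,y)=c_{n,\a}|x'-y'|^{\a-1}$ computed in the proof of Lemma \ref{aaus}; both commutations are legitimate because on $f\in\tilde C_e^\infty(S^n)$ the kernels in question extend analytically in $\a$ to a neighborhood of $\a=3-n$ and the integrals converge absolutely there (note $\vp\in C_c^\infty(B_n^\circ)$ vanishes near $|y'|=1$, so the Newton potential is smooth and finite at every interior $x'$).
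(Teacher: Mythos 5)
Your proposal is correct and follows essentially the same route as the paper: the corollary is obtained precisely by equating the expressions for $\underset{\a=3-n}{a.c.}(PN^\a f)(x)$ from Lemmas \ref{aaus} and \ref{aauss}, identifying the right-hand side of (\ref{anals}) as $c_n(I^2\vp)(x')$ with $c_n=4\pi^{(n-1)/2}(n-2)$, and using (\ref{b77b}) to replace $(Mf)(\th,t)(1-t^2)^{n/2-1}$ by $\sig_{n-1}^{-1}(Vf)(\th,t)(1-t^2)^{-1/2}$. Your constant bookkeeping, including $(n-3)!\,(n-2)=(n-2)!$ in $\lam_n$, matches the paper's (\ref{kio}).
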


Now, inverting $I^2\vp$  by making use of the Laplace operator $\Del=\partial_1^2 +\ldots +\partial_n^2$, we can reconstruct $\vp$, and therefore $f$.

\begin{theorem} \label{774}  If $n>2$, then a function $f\in \tilde C_e^\infty (S^n)$ can be reconstructed from the vertical slice transform $Vf$ as follows.

\noindent {\rm (i)} If $n=3,5, \ldots\, $, then
\[
f(x)=  \lam_n |x_{n+1}|  (-\Del) \intl_{S^{n-1}} (d/dt)^{n-3} [(Vf)(\th, t)\, (1-t^2)^{-1/2}]\Big |_{t=\th\cdot x'}\, d\th,\]
where $\lam_n$ is the constant (\ref{kio}).

 \noindent {\rm (ii)} If $n=4,6,\ldots\,$, then
\bea
f(x)\!\!&=&\! \frac{\lam_n}{\pi}\, |x_{n+1}| \, \Del \intl_{S^{n-1}} d\sig (\th) \nonumber\\
\label {87o} \!\!&\times&\!\!\intl_{-1}^1 (d/dt)^{n-2} [(Vf)(\th, t)\, (1\!-\!t^2)^{-1/2}]\, \log |t\!-\!\th \cdot x'|\, dt.\qquad \eea
\end{theorem}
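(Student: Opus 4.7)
The plan is to start from the Corollary, which expresses the Newtonian potential $(I^2\vp)(x')$ in terms of the vertical slice transform $Vf$, and then invert the Newtonian potential by applying the Laplacian. The final formula is obtained by expressing $f$ itself in terms of $\vp$.

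First I would verify that $\vp$ has the right function-theoretic properties to allow the pointwise identity $-\Del (I^2\vp)(x')=\vp(x')$ to hold inside $B_n$. Since $f\in\tilde C_e^\infty(S^n)$ vanishes in a neighborhood of the equator $\{x_{n+1}=0\}$, the function $\vp(y')=(1-|y'|^2)^{-1/2}f(y',\sqrt{1-|y'|^2})$ defined in \eqref{mzs} is $C^\infty$ and compactly supported in the interior of $B_n$; in particular the $(1-|y'|^2)^{-1/2}$ factor is harmless. This is exactly the setting in which $I^2\vp$ is the standard Newtonian potential of a smooth compactly supported function, so $-\Del I^2\vp=\vp$ holds classically, and differentiation may be freely interchanged with the $\th$-integration in the formulas of the Corollary.

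Next I would apply $-\Del$ in the variable $x'$ to both identities in the Corollary. For odd $n$ this yields directly
\[
\vp(x')=-\Del(I^2\vp)(x')=\lam_n\,(-\Del)\intl_{S^{n-1}}(d/dt)^{n-3}[(Vf)(\th,t)(1-t^2)^{-1/2}]\Big|_{t=\th\cdot x'}\,d\th,
\]
and for even $n$ the sign flips due to the prefactor $-\lam_n/\pi$, producing the $+\lam_n/\pi\,\Del$ form shown in \eqref{87o}.

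Finally I would convert $\vp$ back to $f$. By the defining identity \eqref{mzs}, $f(x',\sqrt{1-|x'|^2})=\sqrt{1-|x'|^2}\,\vp(x')$. Since a point $x\in S^n$ satisfies $|x_{n+1}|=\sqrt{1-|x'|^2}$ and $f$ is even in the last variable, this upgrades to $f(x',x_{n+1})=|x_{n+1}|\,\vp(x')$ on all of $S^n$. Substituting the formulas for $\vp$ obtained above gives the two reconstruction formulas of Theorem \ref{774}.

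The main obstacle is really the passage from the integral identity of the Corollary to the pointwise recovery of $\vp$: one must justify that $-\Del$ commutes with the single or double integration (including the logarithmic kernel in the even case) and that $-\Del I^2\vp=\vp$ holds in the classical sense. Both issues are routine given the hypothesis $f\in\tilde C_e^\infty(S^n)$, since the support condition near the equator makes $\vp$ a $C^\infty_c$ function on $\rn$; apart from this regularity check, the argument is a straightforward composition of Lemma~\ref{aaus}, Lemma~\ref{aauss}, and the elementary relation between $f$ and $\vp$.
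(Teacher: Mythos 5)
Your proposal is correct and follows essentially the same route as the paper: the paper's (very terse) justification of Theorem \ref{774} is precisely to apply $-\Del$ to the identities of the Corollary, use $\vp=-\Del I^2\vp$, and recover $f$ via $f(x',x_{n+1})=|x_{n+1}|\,\vp(x')$. Your sign-tracking in the even case and the observation that the support condition in $\tilde C_e^\infty(S^n)$ makes $\vp$ smooth and compactly supported in the open ball (so that the classical Newtonian-potential inversion and the interchange of $\Del$ with the $\th$-integration are legitimate) are exactly the points the paper leaves implicit.
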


\subsection{The case $n=2$}\label{5477}
In this case a substitute of the Newtonian potential $I^2\vp$  is the  logarithmic potential
\be\label {87f}
(I^2_{\ast}\vp)(x')=\frac{1}{2\pi}\intl_{B_2} \vp(y')\log|x'-y'|\,dy', \qquad x' \in B_2. \ee

\begin{lemma}\label{nn7s} If $f\in \tilde C_e^\infty (S^2)$, $\vp(x')\!=\!(1\!-\!|x'|^2)^{-1/2}\,f(x', (1\!-\!|x'|^2)^{1/2})$, then
\bea
(I^2_{\ast}\vp)(x')\!&=&\!\frac{1}{4\pi}\intl_{S^1} d\sig(\th) \intl_{-1}^1 (Mf)(\th, s) \log |s -\th \cdot x'|\, ds\nonumber\\
\label{nhy} &+&\frac{\log 2}{4\pi} \intl_{S^2} \!f(y)\, d\sig (y).\eea
\end{lemma}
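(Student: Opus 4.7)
The plan is to rewrite $(I^2_*\vp)(x')$ as an integral of $f$ over $S^2$, expand $\log|x'-y'|$ into plane waves via the logarithmic Funk-type formula (\ref{ml34}), and then contract via (\ref{8ghv}).

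First, I invoke Lemma \ref{lem2.1} with integrand $f(y)\log|x'-y'|$ over $S^2_+$ --- the factor $\log|x'-y'|$ depends only on the first two coordinates of $y$ --- and combine with the evenness of $f$ in $y_3$ to obtain
\[
\intl_{B_2} \vp(y')\log|x'-y'|\, dy' = \tfrac{1}{2}\intl_{S^2} f(y) \log|x'-y'|\, d\sig(y),
\]
so that $(I^2_*\vp)(x') = \frac{1}{4\pi}\int_{S^2} f(y)\log|x'-y'|\, d\sig(y)$. Here $y'\in B_2$ denotes the projection of $y\in S^2$ onto its first two coordinates, and for $\theta\in S^1\subset\bbr^2$, identified with $(\theta,0)\in\bbr^3$, one has $\theta\cdot(x'-y')=\theta\cdot x'-\theta\cdot y$.

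Second, I apply (\ref{ml34}) with $x'$ replaced by $x'-y'$. The additive constant $-\frac{1}{\sqrt{\pi}}\frac{d}{d\a}\bigl[\Gam(\a/2)/\Gam((1+\a)/2)\bigr]\big|_{\a=1}$ evaluates to $\log 2$ via a short digamma calculation (using $\Gam(1/2)=\sqrt{\pi}$, $\psi(1/2)=-\gamma-2\log 2$, $\psi(1)=-\gamma$). Substituting and interchanging the order of integration gives
\[
(I^2_*\vp)(x') = \frac{1}{8\pi^2}\intl_{S^1} d\sig(\theta)\intl_{S^2} f(y) \log|\theta\cdot x' - \theta\cdot y|\, d\sig(y) + \frac{\log 2}{4\pi}\intl_{S^2} f(y)\, d\sig(y).
\]

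Third, for each fixed $\theta\in S^1$ I invoke (\ref{8ghv}) with $n=2$, $a(s)=\log|s-\theta\cdot x'|$, $\sig_1=2\pi$, and the trivial weight $(1-s^2)^{n/2-1}=1$, yielding
\[
\intl_{S^2} f(y)\log|\theta\cdot x' - \theta\cdot y|\, d\sig(y) = 2\pi\intl_{-1}^1 (Mf)(\theta,s)\log|s-\theta\cdot x'|\, ds.
\]
Substituting back produces the claimed identity. The hypothesis $f\in\tilde C_e^\infty(S^2)$, together with the separation of $\supp f$ from the equator, ensures absolute convergence of all integrals and legitimizes Fubini; the logarithmic singularities of the kernel at $|s|=1$ and $s=\theta\cdot x'$ are harmless since the $(Mf)(\theta,s)$ factor vanishes near $s=\pm 1$ and the outer $d\sig(\theta)\, ds$ integration is finite.

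The only non-routine step, and the sole obstacle, is confirming that the residual constant from (\ref{ml34}) at $\a=1$ is exactly $\log 2$; everything else is a direct bookkeeping reduction using the formulas already assembled in the paper.
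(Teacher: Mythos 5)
Your proof is correct and follows essentially the same route as the paper: both rest on Lemma \ref{lem2.1} plus evenness to pass from $I^2_*\vp$ to an integral over $S^2$, the plane-wave decomposition of $\log|x'-y'|$ over $S^1$ with additive constant $\log 2$, and the slice identity (\ref{8ghv}) to produce the $(Mf)(\th,s)$ integral. The only cosmetic difference is that you extract the constant $\log 2$ from the digamma derivative in (\ref{ml34}), while the paper gets it from the table integral $\frac1\pi\int_{-1}^1\log|t|\,(1-t^2)^{-1/2}\,dt=-\log 2$ inside its kernel computation $k_*(x,y)=\log|x'-y'|-\log 2$; these are the same identity.
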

\begin{proof}  Let
$$
\left( N_{\ast}f\right)(\th, t) =\intl_{S^2} f(y)\log|\th\cdot y-t|\,d\sig(y), \qquad (\th, t)\in S^1 \times  (-1,1),
$$
\be\label{pfnsx}
(P_*F)(x)= \frac{1}{2\pi}\,\intl_{S^{1}} F(\th, \th \cdot x)\,
d\sig (\th), \qquad x \in S^2.\ee
Changing the order of
integration,   we obtain
$$ (P_*N_{\ast} f)(x)=\intl_{S^2} f(y)\,k_{\ast} (x,y)\, d\sig (y),
$$ where
 \bea k_{\ast} (x,y)\!&=& \!\frac{1}{2\pi}\,\intl_{S^1}  \log|\th\cdot (x\!-\!y)|\,d\sig (\th)\nonumber\\
  \!&=&\! \log|x'\!-\!y'|+\frac{1}{2\pi}\,\intl_{S^1}  \log|\th\cdot \om|\,d\sig (\th), \quad \om\!=\!\frac{x'\!-\!y'}{|x'\!-\!y'|}\in S^1.\nonumber\eea
The second term can be easily evaluated:
\[
\frac{1}{2\pi}\,\intl_{S^1}  \log|\th\cdot \om|\,d\sig (\th)=\frac{1}{\pi} \intl_{-1}^1 \frac{\log |t|}{\sqrt {1-t^2}}\, dt=-\log 2;\]
see, e.g., \cite[formula 4.241 (7)]{GRy}. Thus
\[   k_{\ast} (x,y)=\log|x'\!-\!y'| -\log 2,\]
and  we have
\[
(P_*N_{\ast} f)(x)\!=\! \intl_{S^2}\! f(y) \log|x'\!-\!y'|\, d\sig (y) \!-\!c_f, \quad c_f\!=\!\log 2  \intl_{S^2} \!f(y)\, d\sig (y).\]
By Lemma \ref{lem2.1} it follows that
\be\label{p8sx}
(P_*N_{\ast} f)(x)= 2\intl_{B_2}\! \vp(y') \log|x'\!-\!y'|\, dy'\!-\!c_f= 4\pi (I^2_{\ast}\vp)(x')\!-\!c_f.\ee
On the other hand, by (\ref{8ghv}),
\[ (N_{\ast} f)(\th,t)=2\pi  \intl_{-1}^1 (Mf)(\th, s) \log |s -t|\, ds\]
and
\be\label{m23}
(P_*N_{\ast} f)(x)=\intl_{S^1} d\sig(\th) \intl_{-1}^1 (Mf)(\th, s) \log |s -\th \cdot x|\, ds.
\ee
Comparing (\ref{m23})  with (\ref{p8sx}),  we obtain (\ref{nhy}).
\end{proof}

Lemma \ref{nn7s}  gives the following inversion result.
\begin{theorem}
\label{Theorem n2.1s2} A function $f\in \tilde C_e^\infty (S^2)$ can be recovered from its vertical slice transform $Vf$ by the formula
\begin{equation}\label {fins}
f(x)  =\frac{|x_3|}{8\pi^2}\, \Delta  \intl_{S^1}d\sig(\th) \intl_{-1}^{1}\!(Vf)(\th, t) \frac{\log |t -\th \cdot x'|}{\sqrt{1-t^2}}\, dt.
\end{equation}
\end{theorem}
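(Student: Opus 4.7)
My plan is to combine Lemma~\ref{nn7s} with the identity (\ref{b77b}) and then invert the resulting logarithmic potential using the planar Laplacian. Specializing (\ref{b77b}) to $n=2$ (so that $\sig_{n-1}=2\pi$ and $(n-1)/2=1/2$) gives $(Mf)(\th, t)=(2\pi)^{-1}(1-t^2)^{-1/2}(Vf)(\th, t)$. Plugging this into the conclusion of Lemma~\ref{nn7s} yields
\[
(I^2_{\ast}\vp)(x')=\frac{1}{8\pi^2}\intl_{S^1}d\sig(\th)\intl_{-1}^{1}(Vf)(\th, t)\,\frac{\log |t-\th\cdot x'|}{\sqrt{1-t^2}}\,dt+C_f,
\]
where $C_f=(\log 2)/(4\pi)\cdot\intl_{S^2}f(y)\,d\sig(y)$ is a constant independent of $x'$.

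Next I would apply $\Del=\partial_1^2+\partial_2^2$ in the variable $x'$ to both sides. The constant $C_f$ is annihilated, while on the left-hand side the standard relation $\Del_{x'}\log|x'-y'|=2\pi\,\del(x'-y')$ in $\bbr^2$, together with the definition (\ref{87f}) of $I^2_{\ast}$, gives $\Del I^2_{\ast}\vp=\vp$. Hence
\[
\vp(x')=\frac{1}{8\pi^2}\,\Del\intl_{S^1}d\sig(\th)\intl_{-1}^{1}(Vf)(\th, t)\,\frac{\log |t-\th\cdot x'|}{\sqrt{1-t^2}}\,dt.
\]
To conclude, I would use the definition $\vp(x')=(1-|x'|^2)^{-1/2}f(x', (1-|x'|^2)^{1/2})$ and the evenness of $f$ in the last coordinate: for $x=(x', x_3)\in S^2$ we have $|x_3|=\sqrt{1-|x'|^2}$, and therefore $f(x)=|x_3|\vp(x')$. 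Multiplying the previous display by $|x_3|$ produces (\ref{fins}).

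The only delicate point is the legitimacy of interchanging $\Del$ with the two integrations. Since $f\in \tilde C_e^\infty(S^2)$ vanishes in a neighborhood of the equator, the auxiliary function $\vp$ lies in $C^\infty_c$ of the open unit disk, so $I^2_{\ast}\vp$ is smooth on $\bbr^2$ and differentiation under the integral sign is standard; no techniques beyond those already developed in Subsection~\ref{5477} are required.
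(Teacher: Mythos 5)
Your proposal is correct and follows essentially the same route as the paper: substitute $(Mf)(\th,t)=(2\pi)^{-1}(1-t^2)^{-1/2}(Vf)(\th,t)$ from (\ref{b77b}) into Lemma \ref{nn7s}, apply $\Del$ using $\Del I^2_{\ast}\vp=\vp$ to kill the constant term, and recover $f(x)=|x_3|\vp(x')$. The extra remark on differentiating under the integral sign is a welcome (if minor) addition to the paper's argument.
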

\begin{proof}
By (\ref{nhy}),
\bea
(I^2_{\ast}\vp)(x')\!&=&\!\frac{1}{4\pi}\intl_{S^1} d\sig(\th) \intl_{-1}^1 (Mf)(\th, s) \log |s -\th \cdot x'|\, ds\nonumber\\
\label{kuks} &+&\frac{\log 2}{4\pi} \intl_{S^2} \!f(y)\, d\sig (y).\nonumber\eea
Hence, owing to (\ref{b77b}) and the equality $\Del I^2_{\ast}\vp=\vp$, we obtain
\bea
\vp(x')&\equiv&(1-|x'|^2)^{-1/2}\,f(x', (1-|x'|^2)^{1/2})\nonumber\\
&=& \frac{1}{8\pi^2} \, \Del \intl_{S^1} d\sig(\th) \intl_{-1}^1 (Vf)(\th, t) \frac{\log |t -\th \cdot x'|}{\sqrt{1-t^2}}\, dt.\nonumber\eea
The latter is equivalent to (\ref{fins}).
\end{proof}

Note that (\ref{fins})  can be formally obtained from  (\ref {87o})  if we set $n=2$.

\begin{remark} Results of this section can  be applied to  explicit solution of the inverse problem for the
 Euler-Poisson-Darboux  equation
\be\label {epd}
\tilde\square_\a u\equiv \Del_S u - u_{\om\om} - (n-1+2\a) \cot \om\, u_\om + \a (n-1+\a)u=0,
\ee
where $x\in S^n$ is the space variable,  $\om\in (0,\pi)$ is the time variable, and  $\Del_S$ is the  Beltrami-Laplace operator on $S^n$ acting in the $x$-variable. The particular case  $\a\!=\!(1\!-\!n)/2$ gives  the wave equation in spherical tomography.  The  Cauchy problem
 \[  \tilde\square_\a u=0, \qquad u(x,0) = f(x), \quad  u_\om (x, 0) = 0,\]
is well-known; see, e.g.,  \cite {O2},  \cite[p. 179]{Ru00a}, and references therein.

The corresponding inverse problem is to find the initial function $f$ and reconstruct  $u(x,\om)$    for all  $(x,\om) \in S^n \times (0,\pi)$  when the solution  $u(x,\om)$    is known only for $x$  restricted to the equator $S^{n-1}$.  The interested reader is referred to
\cite[Section 6]{AER}, where this problem is solved in a more general setting for geodesic spheres of arbitrary fixed radius $0<\th\le \pi/2$. \end{remark}


\begin{thebibliography}{[ASMR]}


\bibitem  {AKQ} M. Agranovsky, P. Kuchment, and E.T. Quinto. Range descriptions for the spherical mean Radon transform. {\it J. Funct. Anal.} 248(2):  344–-386, 2007.

\bibitem  {AN}  M. Agranovsky and Linh V. Nguyen. Range conditions for a spherical mean transform and global extendibility of solutions of the Darboux equation. {\it J. Anal. Math.} 112: 351–-367,  2010.

\bibitem  {AK} G. Ambartsoumian and P. Kuchment. A range description for the
planar circular Radon transform. \textit{ SIAM J. Math. Anal.}  38:
 681--692, 2006.

 \bibitem  {AER}  Y. A. Antipov, R. Estrada, and B. Rubin.
Method of analytic continuation for the inverse spherical mean transform in constant curvature spaces.
\textit{Journal D'Anal. Math.} 118:  623--656, 2012.

\bibitem {Er} A. Erd\'elyi (Editor),  {\it Higher transcendental functions}, Vol. I
and  II. McGraw-Hill, New York, 1953.

\bibitem  {FHR}  D. Finch, M.  Haltmeier, and Rakesh.  Inversion of spherical means and
the wave equation in even dimensions. \textit{ SIAM J. Appl. Math.}  68:
 392--412, 2007.

\bibitem  {FR}  D. Finch and  Rakesh.  The spherical mean value operator with centers on a
sphere.  \textit{Inverse Problems}  23: S37--S49,  2007.


\bibitem  {GSh1} I. M. Gel'fand and G. E. Shilov.  \textit{Generalized functions}, Vol. 1. \textit{ Properties
and operations},
 Academic Press, New
York-London, 1964.


\bibitem  {GGG} I. M. Gelfand, S. G. Gindikin, and  M. I. Graev.  \textit{Selected topics in integral geometry}. Translations of Mathematical Monographs. AMS, Providence, Rhode Island, 2003.

\bibitem  {GRS} S. Gindikin, J. Reeds, and L. Shepp.
Spherical tomography and spherical integral geometry. In {\it Tomography, impedance imaging, and integral geometry (South Hadley, MA, 1993)}, 83–-92,
Lectures in Appl. Math., 30, Amer. Math. Soc., Providence, RI, 1994.

\bibitem  {GRy} I. S. Gradshteyn  and I. M. Ryzhik. \textit{Table of integrals, series and products}, Academic Press, 1980.


\bibitem  {H}  S. Helgason. \textit{Integral geometry and Radon transform}. Springer, New York-Dordrecht-Heidelberg-London, 2011.

\bibitem  {HQ} R. Hielscher and M. Quellmalz. Reconstructing a function on the sphere from its means along vertical slices. {\it Inverse Probl. Imaging}, 10(3), 711–-739,  2016.

\bibitem  {J} F. John.  \textit{Plane waves and spherical means applied to partial differential equations}. Reprint of the 1955 original. Dover Publications, Inc., Mineola, NY, 2004.

\bibitem  {KLFA}  R. A. Kruger, P. Liu,  Y. R. Fang, and  C. R. Appledorn.
 Photoacoustic ultrasound (PAUS)—-reconstruction tomography. {\it Med.
Phys.} 22:  1605–-1609, 1995.

\bibitem {Ku} P. Kuchment. Generalized transforms of Radon type and their applications. \textit{ The Radon Transform}. In \textit{Inverse Problems, and Tomography (Proc. Sympos. Appl. Math. vol 63)}. Providence, RI: American Mathematical Society,  pp. 67–-91, 2006.

\bibitem  {Ku14} P. Kuchment. \textit{The Radon transform and medical imaging}. CBMS-NSF Regional Conference Series in Applied Mathematics, 85, Society for Industrial and Applied Mathematics (SIAM), Philadelphia, PA, 2014.


\bibitem  {KK08} P. Kuchment and L.  Kunyansky. Mathematics of thermoacoustic tomography. {\it European J. Appl. Math.}, 19(2), 191–-224,  2008.


\bibitem  {KK15}  P. Kuchment and L.  Kunyansky. Mathematics of photoacoustic and thermoacoustic tomography. {\it Handbook of mathematical methods in imaging}. Vol. 1, 2, 3, 1117–-1167, Springer, New York, 2015.

\bibitem  {Ku} L. Kunyansky. Explicit inversion formulae for the spherical
mean Radon transform.  {\it Inverse Problems}  23, 373--383,   2007.

\bibitem {M13} W. R. Madych. Approximate reconstruction from circular and spherical mean Radon transform data.
In \textit{Geometric analysis and integral geometry}, Contemp. Math., 598,  155-–165, Amer. Math. Soc., Providence, RI, 2013.

\bibitem {Mi2} S. G. Mikhlin. \textit{Mathematical physics, an advanced course}. North-Holland Publ. Company, Amsterdam, 1970.

\bibitem  {Na1} F. Natterer. \textit{The Mathematics of computerized tomography}. SIAM, Philadelphia, 2001.

\bibitem  {N}  Linh V. Nguyen. Range description for a spherical mean transform on spaces of constant curvature. {\it J. Anal. Math.}, 128: 191–-214, 2016.

\bibitem   {O2} M. N. Olevskii. On the equation $A_P u(P,t)\!=\!\left (\frac {\partial^2}
{\partial t^2} \!+\! p(t)\frac {\partial}
{\partial t} \!+\! q(t) \right)  u(P,t)$\\  ($A_P$ is a linear operator)
and the solution of the Cauchy problem for the generalized equation of
Euler-Darboux.  {\it Dokl. AN SSSR},   93:  975--978, 1953   (Russian).


\bibitem  {P} V.  Palamodov.  {\it Reconstructive integral geometry}. Monographs in
Mathematics, 98. Birkhäuser Verlag, Basel, 2004.


\bibitem  {Ru96} B. Rubin. {\it Fractional integrals and potentials}.  Pitman Monographs and Surveys in Pure and Applied Mathematics,
 82,  Longman, Harlow, 1996.

\bibitem  {Ru00a} B. Rubin. Generalized Minkowski-Funk transforms and  small denominators on the
sphere.  \textit{Fractional Calculus and Applied Analysis}, 3(2):  177--203, 2000.


\bibitem  {Ru08b}  B. Rubin. Inversion formulas for the spherical mean in odd dimensions and the Euler-Poisson-Darboux equation. \textit{Inverse Problems}, 24:  025021 (10 pp.), 2008.


\bibitem  {Ru15b}  B. Rubin. {\it  Introduction to Radon transforms: With elements of fractional calculus and harmonic analysis}.  Cambridge University Press,  New York, 2015.

\bibitem  {Sa5}  S.G. Samko. \textit{Hypersingular integrals and their applications}. Taylor \& Francis, Series: Analytical Methods and Special Functions, Volume 5, 2002.

\bibitem  {XW} M. Xu and L.V. Wang.  Universal back-projection algorithm for photoacoustic computed
tomography. \textit{ Phys. Rev. E} 71, 016706 (7 pages),  2005.

\bibitem  {ZS} G. Zangerl and O. Scherzer.  Exact reconstruction in photoacoustic tomography with circular integrating detectors II: spherical geometry. {\it Math. Methods Appl. Sci.} 33(15):  1771–-1782, 2010.


\end{thebibliography}
\end{document}